\def\red#1{\textcolor[rgb]{1,0.00,0.00}{#1}}
\def\???{\red{???}}
\newtheorem{theorem}{Theorem}[section]
\newtheorem{lemma}[theorem]{Lemma}
\newtheorem{proposition}[theorem]{Proposition}
\newtheorem{remark0}[theorem]{Remark}
\newtheorem{example0}[theorem]{Example}
\newtheorem{definition}[theorem]{Definition}
\newenvironment{example}{\begin{example0}\rm}{\end{example0}}
\newenvironment{remark}{\begin{remark0}\rm}{\end{remark0}}
\newcommand{\defref}[1]{Definition~\ref{#1}}
\newcommand{\propref}[1]{Proposition~\ref{#1}}
\newcommand{\thmref}[1]{Theorem~\ref{#1}}
\newcommand{\lemref}[1]{Lemma~\ref{#1}}
\newcommand{\exref}[1]{Example~\ref{#1}}
\newcommand{\remref}[1]{Remark~\ref{#1}}
\def\max{{\bf m}}                   
\def\res{{\bf k}}                   
\def\ser2{{\res[\![x,y]\!]}}
\def\ser3{{\res[\![x,y,z]\!]}}
\def\con#1{{\mathbb C}\{x_1,\dots ,x_{#1}\}}
\def\ori#1{(\mathbb C^{#1},0)}
\def\Oori[#1]#2{{\mathcal O}_{(\mathbb C^{#1},#2)}}
\def\OX{{\mathcal O}_{(X,0)}}
\def\cont{{\mathbb C}\{t\}}
\def\conu{{\mathbb C}\{u\}}
\def\pol2{{\res[x,y]}}
\def\pol3{{\res[x,y,z]}}
\def\ext{{\mathrm{ Ext}}}
\def\HF{{\operatorname{H\!F}}}
\def\HP{\operatorname{H\!P}}
\def\adj{\operatorname{Adj}}
\def\mr{\operatorname{MatRep}}
\def\Hom{\operatorname{Hom}}
\def\ker{\operatorname{ker}}
\def\coker{\operatorname{coker}}
\def\dim{\operatorname{dim}}
\def\pd{\operatorname{pd}}
\def\length{{\mathrm{ Length}}}
\def\spec{{\bf Spec}}
\title{  \bf Matrix factorization and the generic plane projection of curve a singularity
\footnote{ 2020 {\it Mathematics Subject Classification} 14B05, 13D02, 14B07.
\newline
\indent \ \ {\it Key words and Phrases:} .}}
\author{\large   J. Elias
\thanks{Partially supported by {{PID2022-137283NB-C22}}}
}
\date{ \today}
\begin{document}
\maketitle

\begin{abstract}
We study the matrix factorizations defined by the generic plane projections of a curve singularity of $\ori{n}$.
On the other hand, given a plane curve singularity $(Y,0)\subset \ori{2}$ we study the family of
matrix factorizations defined by the space curve singularities
$(X,0)\subset \ori{n}$ such that $(Y,0)$ is the generic plane projection of $(X,0)$.
\end{abstract}

\section{Introduction}

The description of the structure and the computation of the length of minimal resolutions of  finitely generated modules
are  central problems in several areas of mathematics.
Hilbert's syzygies theorem  and the results of Auslander, Buchsbaum and Serre  prove that regular rings are those for which  all finitely generated modules have finite minimal resolutions.
Koszul complexes and the theorems of Hilbert-Burch and Buschbaum-Eisenbud give  explicit structures
of the (finite) minimal resolutions of complete intersections, codimension two perfect ideals and
codimension three Gorenstein ideals, respectively.
In all these cases the information provided by the minimal resolution is finite and essentially unique.

On the other hand, infinite minimal resolutions naturally arise in algebra since we consider modules over non-regular rings.
See  the contribution  of Avramov in \cite{SCA96} for a good introduction to infinite resolutions.
Eisenbud proved in \cite{Eis80} that finitely generated modules over a complete intersections of codimension one, i.e. $R=S/(F)$ with $R$ a local regular ring,
eventually becomes periodic of period two.
The matrices $A, B$ of the morphisms defining this periodic structure is a matrix factorization of $F$, that means    $AB=BA= F \rm{Id}$.
This construction was generalized to complete intersection rings by Eisenbud and Peeva in \cite{EP16}.
Although the minimal resolution is infinite in such cases, the information defining it remains finite.
See \cite[Chapter 1]{EP16} for a introduction and survey to matrix factorizations,
see also \cite{EP19}, \cite{EPS21} and the references therein.

\medskip
The aim of this paper is to study matrix factorizations that naturally appear when we consider generic plane projections of an irreducible curve singularity of $\ori{n}$.
Given a curve singularity  $(X,0)\subset \ori{n}$ the generic plane projections $(Y,0)$ are well defined up to equisingularity type,
see \defref{gen-proj},
inducing finite ring extensions
$\pi^*: \mathcal O_{(Y,0)}\longrightarrow \mathcal O_{(X,0) }$.
Since $\mathcal O_{(Y,0)}=\mathbb C\{x,y\}/(F)$ is a complete intersection ring,  by \cite{Eis80} there exists a matrix factorization of $F$ defined by a pair of  square matrices $A$, $B$, with entries in $(x,y)\mathbb C\{x,y\}$, such that $AB=BA= F \mathrm{Id}$, inducing an exact sequence
$$
0\longrightarrow
\mathbb C\{x,y\}^b \stackrel{A}{\longrightarrow}
\mathbb C\{x,y\}^b \longrightarrow \mathcal O_{(X,0) }\longrightarrow 0,
$$
and a minimal periodic free resolution of $\mathcal O_{(Y,0)}$-modules
$$
\cdots \longrightarrow
\mathcal O_{(Y,0)}^b \stackrel{B}{\longrightarrow}
\mathcal O_{(Y,0)}^b \stackrel{A}{\longrightarrow}
\mathcal O_{(Y,0)}^b \stackrel{B}{\longrightarrow}
\mathcal O_{(Y,0)}^b \stackrel{A}{\longrightarrow}
\mathcal O_{(Y,0)}^b \longrightarrow \mathcal O_{(X,0) }\longrightarrow 0,
$$
see \thmref{matfact}.
Moreover, we know that the family of generic plane projections is  flat, see section $3$.
This flat family induces a global matrix factorization specializing  to the above matrix factorization,  \thmref{matfact}.

In the section $5$ we consider the family of
matrix factorizations defined by the space curve singularities
$(X,0)\subset \ori{n}$ with a given  a generic plane projection  $(Y,0)\subset \ori{2}$.
In \propref{MatRep-proj}
we characterize the matrix factorizations of $F\in \mathbb C\{x,y\}$, equation defining $(Y,0)$,
induced by  a space curve singularity $(X,0)\subset \ori{n}$.
Equivalently, we characterize the dimension $b$ square matrices $A$ such that the cokernel of
$\mathbb C\{x,y\}^b \stackrel{A}{\longrightarrow}
\mathbb C\{x,y\}^b$ is a curve singularity with generic plane projection $(Y,0)$.

In \thmref{bijection} we establish a bijection between the orbits of the action of the general linear group in the family of matrix factorizations induced by curve singularities with generic plane projection $(Y,0)$ and
the clases of isomorphism of $\mathcal O_{(Y,0)}$-algebras of the rings $\mathcal O_{(X,0)}$,
where $(X,0)\subset \ori{n}$ is a curve singularity with  $(Y,0)\subset \ori{2}$ as a generic plane projection.

The explicit computations of this paper are performed  by using the computer algebra system  Singular  \cite{DGPS}.

\medskip
\section{Preliminaries}

Let $(X,0)$ be a  an analytic reduced curve singularity of $\ori{n}$, from now on  curve singularity.
The local ring $\OX$ is a one-dimensional reduced local ring with maximal ideal $\max_{(X,0)}$.
We write by $I_{(X,0)}\subset {\mathcal O}_{\ori{n}}=\con{n}$ the radical ideal defining  $(X,0)$, i.e.
${\mathcal O}_{(X,0)}=\con{n}/I_{(X,0)}$.

The  Hilbert
function of $(X,0)$ is the numerical function $\HF_{(X,0)}:\mathbb N\longrightarrow \mathbb N$ such that
$\HF_{(X,0)}(n)=\length_{{\mathcal O}_{(X,0)}}(\max_{(X,0)}^n/\max_{(X,0)}^{n+1})$ for all $n\in \mathbb N$,
The  Hilbert-Samuel  function of $(X,0)$ is defined
by
$
\HF^{1}_{(X,0)}(n)=\sum_{j=0}^{n}\HF_{(X,0)}(j)=\length({\mathcal O}_{(X,0)}/\max_{(X,0)}^{n+1}),
$
$n\in \mathbb N$.
It is well known that there exist integers $e_0(X,0), e_1(X,0) \in
\mathbb N$ such that
$
\HP^1_{(X,0)}(T)=e_0(X,0) (T+1) -e_1(X,0)
$
 is the  Hilbert-Samuel polynomial of $(X,0)$,
i.e. $\HF^1_{(X,0)}(n)=\HP^1_{(X,0)}(n)$ for $n \gg 0$.
The integer $e_0(X,0)$
is the multiplicity of $(X,0)$ and
the embedding dimension of $(X,0)$ is $\HF_{(X,0)}(1)= \dim_{\mathbb C}(\max_{(X,0)}  / \max_{(X,0)}^2)$.

Let $\nu: \overline{X}=\spec (\overline{\OX})\longrightarrow (X,0)$ be the normalization of $(X,0)$, where
$\overline{\OX}$ is the integral closure of $\OX$ on its full ring  of fractions ${\mathrm{ tot}}(\OX)$.
The singularity order of $(X,0)$ is
$
\delta(X,0)=\dim_{\mathbb C}\left(\overline{\OX}/\OX\right).
$
We denote by $\mathcal C(X,0)$ the conductor of the finite extension
$\nu^*: \OX \hookrightarrow
\overline{\OX}$
and by $c(X,0)$ the dimension of $\overline{\OX}/\mathcal C(X,0)$.
Let  $\omega_{(X,0)}=\ext ^{n-1}_{\Oori[n]{0}}(\OX, \Omega^n_{\ori{n}})$ be the dualizing module of $(X,0)$.
We can consider the composition morphism of $\OX$-modules
$$
\gamma_X: \Omega_{(X,0)} \longrightarrow
\nu_* \Omega_{\overline{X}}\cong
\nu_* \omega_{\overline{X}} \longrightarrow
 \omega_{(X,0)}.
$$
Let $d: \OX \longrightarrow  \Omega_{(X,0)}$ the universal derivation.
The Milnor number of $(X,0)$ is, \cite{BG80},
$$\mu(X,0)=\dim_{\mathbb C}(\omega_{(X,0)}/(\gamma_X \circ d) \OX)$$
Notice that $(X,0)$ is non-singular iff $\mu(X,0)=0$ iff $\delta(X,0)=0$ iff $c(X,0)=0$.

\medskip
In the following result we collect some basic results on $\mu(X,0)$ and other numerical invariants that
we will use later on.

\medskip
\begin{proposition}
\label{basic}
Let $(X,0)$ be a  curve singularity of embedding dimension $n$.
Then

\noindent
$(i)$ $\mu(X,0)= 2 \delta(X,0) -r(X,0)+1$, where $r(X,0)$ is the number of branches of $(X,0)$.

\noindent
$(ii)$
It holds
$$
 e_0(X,0)-1\le e_1(X,0) \le \delta(X,0)\le \mu(X,0)
$$
and $e_1(X,0)\le \binom{e_0(X,0)}{2}-\binom{n-1}{2}$.

\noindent
$(iii)$ If $X$ is singular then $\delta(X,0)+1 \le c(X,0)\le 2 \delta(X,0)$, and $c(X,0) =  2 \delta(X,0)$
if and only if $\OX$ is a Gorenstein ring.
\end{proposition}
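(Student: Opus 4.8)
All three statements are classical; the plan is to assemble them from known results together with the invariants recalled above, so I only indicate the arguments. Statement $(i)$ is the theorem of Buchweitz and Greuel \cite{BG80}: with $\mu(X,0)$ defined through the dualizing module as above, the identity $\mu(X,0)=2\delta(X,0)-r(X,0)+1$ is obtained by a length count along the exact sequences relating $\Omega_{(X,0)}$, its torsion submodule, $\nu_*\Omega_{\overline X}\cong\nu_*\omega_{\overline X}$ and $\overline{\OX}/\OX$; I would simply quote this identity.

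For $(ii)$, write $R=\OX$, let $\overline R$ be its normalization and $\mathfrak m$ the maximal ideal of $R$. Since $R$ is one--dimensional and reduced it is Cohen--Macaulay, so Northcott's inequality gives $e_0(X,0)-1\le e_1(X,0)$. For $\delta(X,0)\le\mu(X,0)$ I use $(i)$: as $\overline R$ has exactly $r(X,0)$ maximal ideals, the image of $R$ in the $r(X,0)$--dimensional $\mathbb C$--vector space $\overline R/J$ (with $J$ the Jacobson radical of $\overline R$) is $1$--dimensional, hence $\delta(X,0)=\dim_{\mathbb C}(\overline R/R)\ge r(X,0)-1$ and $\mu(X,0)=\delta(X,0)+\bigl(\delta(X,0)-r(X,0)+1\bigr)\ge\delta(X,0)$. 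For $e_1(X,0)\le\delta(X,0)$ I compare the Hilbert--Samuel functions of $R$ and $\overline R$: fix a superficial element $x\in\mathfrak m$; for $n\gg0$ one has $\mathfrak m^{n}\overline R\subseteq R$, $x\mathfrak m^{n}=\mathfrak m^{n+1}$ and $x\mathfrak m^{n}\overline R=\mathfrak m^{n+1}\overline R$, while $\dim_{\mathbb C}(\overline R/\mathfrak m^{n}\overline R)=n\,e_0(X,0)$ (the normalization being module--finite over $R$ and a product of discrete valuation rings, with the multiplicity additive over branches); plugging these into the exact sequences
$$0\to \mathfrak m^{n}\overline R/\mathfrak m^{n}\to R/\mathfrak m^{n}\to R/\mathfrak m^{n}\overline R\to 0,\qquad 0\to R/\mathfrak m^{n}\overline R\to \overline R/\mathfrak m^{n}\overline R\to \overline R/R\to 0$$
gives $\HF^{1}_{(X,0)}(n)=\dim_{\mathbb C}(R/\mathfrak m^{n+1})=(n+1)e_0(X,0)-\delta(X,0)+\dim_{\mathbb C}(\mathfrak m^{n+1}\overline R/\mathfrak m^{n+1})$, which together with $\HF^{1}_{(X,0)}(n)=(n+1)e_0(X,0)-e_1(X,0)$ yields $e_1(X,0)=\delta(X,0)-\dim_{\mathbb C}(\mathfrak m^{n+1}\overline R/\mathfrak m^{n+1})\le\delta(X,0)$. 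Finally, $e_1(X,0)\le\binom{e_0(X,0)}{2}-\binom{n-1}{2}$ is the sharp upper bound for the first Hilbert coefficient of a one--dimensional Cohen--Macaulay local ring of multiplicity $e_0(X,0)$ and embedding dimension $n$; it follows from Macaulay's growth estimates applied to the Hilbert function of $R$, which begins $1,n$ and is dominated by $e_0(X,0)$, and I would quote it from the literature on Hilbert coefficients (the bound $e_1\le\binom{e_0}{2}$ being the classical one, the correction $\binom{n-1}{2}$ recording the embedding dimension).

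For $(iii)$, write $R=\OX$, $\overline R$ for its normalization and $\mathcal C=\mathcal C(X,0)$ for the conductor, the largest $\overline R$--submodule of $R$; all colon operations below are taken inside $\mathrm{tot}(R)$. Since $\mathcal C\subseteq R\subseteq\overline R$, $c(X,0)=\dim_{\mathbb C}(\overline R/\mathcal C)=\delta(X,0)+\dim_{\mathbb C}(R/\mathcal C)$; if $X$ is singular then $\mathcal C\ne R$ (otherwise $R$ would be an $\overline R$--module, hence $R=\overline R$), so $\dim_{\mathbb C}(R/\mathcal C)\ge1$ and $c(X,0)\ge\delta(X,0)+1$. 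For the upper bound I realize $\omega_{(X,0)}$ as a canonical ideal $\mathcal K$ with $R\subseteq\mathcal K\subseteq\overline R$ (possible as the residue field is infinite), so that $\mathcal K:\mathcal K=\Hom_R(\mathcal K,\mathcal K)=R$, and I use the length--preserving involution $I\mapsto I^{\vee}:=\Hom_R(I,\mathcal K)=\mathcal K:I$ on rank one maximal Cohen--Macaulay fractional $R$--ideals, which exchanges $R$ and $\mathcal K$ (its length--preserving property being standard, from the vanishing of $\ext^{>0}_R(-,\omega_{(X,0)})$ on maximal Cohen--Macaulay modules together with local duality for finite--length modules). Then $\mathcal C=R:\overline R=(\mathcal K:\mathcal K):\overline R=\mathcal K:(\mathcal K\overline R)=(\mathcal K\overline R)^{\vee}$, so $\dim_{\mathbb C}(R/\mathcal C)=\dim_{\mathbb C}(\mathcal C^{\vee}/R^{\vee})=\dim_{\mathbb C}(\mathcal K\overline R/\mathcal K)$, and since $R\subseteq\mathcal K\subseteq\mathcal K\overline R\subseteq\overline R$ this is $\le\dim_{\mathbb C}(\overline R/R)=\delta(X,0)$, i.e. $c(X,0)\le 2\delta(X,0)$, with equality precisely when $\mathcal K\overline R=\overline R$ and $\mathcal K=R$, that is (since $\mathcal K=R$ already forces $\mathcal K\overline R=\overline R$) when $R$ is Gorenstein; the converse is the same chain read backwards, and alternatively the whole of $(iii)$ may be quoted from the literature on dualizing modules of one--dimensional rings. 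The steps I expect to be the main obstacle are the bound $e_1(X,0)\le\binom{e_0(X,0)}{2}-\binom{n-1}{2}$ in $(ii)$, whose extremal Hilbert--function analysis I would import rather than reprove, and the identification of the conductor inside the canonical--module duality in $(iii)$; neither requires a genuinely new idea, and since the proposition only records facts used later these will be referenced rather than reproved in full.
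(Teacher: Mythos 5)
Your proposal is correct and is essentially the paper's approach: the paper proves Proposition~\ref{basic} purely by citation (to Buchweitz--Greuel for $(i)$, to Northcott and Elias for $(ii)$, and to Serre and Bertin--Carbonne for $(iii)$), and the arguments you supply --- Northcott's comparison of $\HF^1_{(X,0)}$ with the Hilbert function of $\overline{\OX}$ for $e_1\le\delta$, and the canonical-ideal duality $\mathcal C=(\mathcal K\overline R)^{\vee}$ for the conductor bound --- are exactly the standard proofs behind those references. The remaining pieces you defer to the literature (the Milnor-number formula and the bound $e_1\le\binom{e_0}{2}-\binom{n-1}{2}$) are the same ones the paper defers to, so there is no substantive difference.
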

\begin{proof}
$(i)$  \cite[Proposition 1.2.1]{BG80}.
$(ii)$ \cite[Proposition 1.2.4 (i)]{BG80}, \cite{Nor59a},  \cite{Eli90}, \cite{Eli01}.
$(iii)$ \cite[Proposition 7, pag. 80]{Ser59}, and \cite{BC77}.
\end{proof}

\medskip
Given a local ring $(A,\max)$ and a finitely generated $A$-module $M$ we denote by
$\beta_A(M)$ the minimal number of generators of $M$, i.e.
$\beta_A(M)=\dim_{A/\max}(M/\max M)$.


\medskip
\section{On the generic projection of a curve singularity}

Throughout this paper we only consider irreducible curve singularities.

Let $(X,0)$ be a curve singularity of $\ori{n}$, i.e. $(X,0)$ is an irreducible curve singularity of $\ori{n}$.
Then $\overline{\OX}\cong \cont$ for some uniformation parameter $t\in {\mathrm{ tot}}(\OX)$ and the morphism
$\nu^*: \OX \hookrightarrow
\overline{\OX}\cong \cont$ can be described
by a family of  series $\nu^*(\overline{x_i})=f_i(t)\in \cont$, $i=1,\dots,n$.
We say that
$$
(X,0) : \left\{
  \begin{array}{l}
    x_1=f_1(t) \\
 \quad\quad \vdots \\
x_n=f_n(t) \\
  \end{array}
\right.
$$
is a parametrization of $(X,0)$.
Notice that the $\mathbb C$-algebra morphism defined by
$$
\begin{array}{cccc}
    \gamma:&\con{n} &  \longrightarrow &\cont\\
    &x_i & \mapsto & f_i(t)
\end{array}
$$
has as kernel the ideal $I_{(X,0)}$ and induces the normalization map $\nu^*$.
The conductor of this ring extension is principal
$\mathcal C(X,0)=(t^{c(X,0)})$.
After a linear change of variables and choosing an appropriate uniformation parameter, i.e. an appropriate generator of the maximal ideal of $\cont$  , we may assume that  the parametrization of $(X,0)$ is defined by:
$f_1(t)=t^{e_0(X,0)}$ and
$f_i(t)=\sum_{j\ge e_0(X,0)+1} b_{j,i} t^j$, $i=2,\dots,n$, with $b_{j,i}\in\mathbb C$.
We say that such a parametrization is presented in a standard form.

We denote by $\pi:Bl(X,0)\longrightarrow (X,0)$ the blowing-up of $(X,0)$ on its closed point.
The fiber of the closed point of $(X,0)$ has a finite number of closed points: the so-called points of the first neighborhood of $(X,0)$.
We can iterate the process of blowing-up until we get the normalization of $(X,0)$, see  \cite{Cut-ResSing}.
We denote by $\inf(X,0)$ the set of infinitely near points of $(X,0)$.
The curve singularity defined by an infinitely point $p$ of $(X,0)$ will be denote by $(X,p)$.
From \cite{Nor59a} we get:

\medskip
\begin{proposition}
\label{basic2}
Let $(X,0)$ be a curve singularity, then
$$
\delta(X,0)=\sum_{p \in \inf(X,0)} e_1(X,p)
$$
\end{proposition}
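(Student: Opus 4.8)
The formula $\delta(X,0)=\sum_{p\in\inf(X,0)} e_1(X,p)$ should follow from two ingredients: first, the behaviour of the $\delta$-invariant under a single blowing-up, and second, the classical identification of the first normalized Hilbert coefficient $e_1$ of a one-dimensional ring with the $\delta$-invariant of the blowing-up, i.e. $e_1(X,p)=\delta(X,p)-\delta\big(Bl(X,p)\big)$ read along all the branches through $p$. Concretely, I would proceed by induction on $\delta(X,0)$, using Proposition~\ref{basic2}'s own recursive nature (the blowing-up strictly decreases $\delta$ unless $(X,0)$ is already smooth, by the last sentence of the preliminaries, since $c(X,0)\le 2\delta(X,0)$ and each blow-up drops the conductor).

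\medskip
First I would record the base case: if $(X,0)$ is non-singular then $\inf(X,0)=\{0\}$, $\delta(X,0)=0$, and $e_1(X,0)=0$ (a regular local ring of dimension one has $e_0=1$, hence $e_1=0$ by the bound $e_0-1\le e_1$ in Proposition~\ref{basic}(ii)), so both sides vanish. For the inductive step, let $p_1,\dots,p_s$ be the points of the first neighbourhood of $(X,0)$, i.e. the closed points of $Bl(X,0)$. The key classical fact I would invoke (essentially Northcott \cite{Nor59a}, or the standard theory of the blowing-up of a one-dimensional local ring) is the single-blow-up identity
$$
\delta(X,0)=e_1(X,0)+\sum_{i=1}^{s}\delta(X,p_i),
$$
which comes from comparing $\OX$, its blow-up $\OX'=\bigoplus_i \mathcal O_{(X,p_i)}$ (the first-neighbourhood ring, which has the same normalization $\cont$), and using $\length(\OX'/\OX)=e_1(X,0)$ — this last equality being the one-dimensional form of the fact that $e_1$ measures the "jump" from a ring to its blow-up, together with $\delta(X,0)=\length(\cont/\OX)$ and additivity of length in the chain $\OX\subset\OX'\subset\cont$. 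Since we assumed $(X,0)$ is irreducible, $Bl(X,0)$ could still be reducible, but in the irreducible analytic curve case the first neighbourhood is again a single point $p_1$ with $\mathcal O_{(X,p_1)}$ local; regardless, each $(X,p_i)$ has strictly smaller $\delta$, so the induction hypothesis applies to each: $\delta(X,p_i)=\sum_{q\in\inf(X,p_i)}e_1(X,q)$. Substituting and using $\inf(X,0)=\{0\}\sqcup\bigsqcup_i \inf(X,p_i)$ gives the result.

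\medskip
The main obstacle, and the place I would spend the most care, is justifying the single-blow-up identity $e_1(X,0)=\length(\OX'/\OX)$ cleanly in the analytic local setting, and making sure the decomposition of $\inf(X,0)$ as the disjoint union of $\{0\}$ with the sets of infinitely near points of the first-neighbourhood singularities is set up so that no point is counted twice and the recursion terminates (which it does, since blowing up a singular point strictly decreases $\delta$ by Proposition~\ref{basic}(iii) or simply because $\OX\subsetneq\OX'$ when $(X,0)$ is singular). The identity $e_1=\length(\OX'/\OX)$ itself is standard: if $\max_{(X,0)}$ is $x$-primary for a generic linear form $x$, then $\OX'=\bigcup_n (\max^{n+1}:x^n)$ in the blow-up, one computes $\length(\OX'/\OX)=\sum_{n\ge0}\big(\HF^1_{(X,0)}(n)-e_0(X,0)(n+1)+e_1(X,0)\big)$ with the summand eventually zero, and one checks it telescopes to $e_1(X,0)$; alternatively cite \cite{Nor59a} directly, from which the proposition is attributed. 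I would phrase the write-up to lean on that citation rather than redo the computation.
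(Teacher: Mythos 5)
Your outline is correct, and it is essentially the argument that the paper's citation is standing in for: the paper offers no proof of \propref{basic2} beyond the phrase ``From \cite{Nor59a} we get'' (and a pointer to \cite{Hir57} for the reducible case), so your reconstruction via the chain $\OX\subset\OX'\subset\cont$, the identity $\length(\OX'/\OX)=e_1(X,0)$, the additivity of $\delta$ along this chain, and induction on $\delta$ is exactly what the reader is expected to supply. Two remarks. First, since the paper works with irreducible singularities, the first neighbourhood ring $\OX'$ is local (it is finite and birational over $\OX$ inside the local ring $\cont$), so $s=1$; your hedge about a reducible first neighbourhood is harmless but unnecessary. Second, and more substantively, the auxiliary formula you propose for the key identity, namely $\length(\OX'/\OX)=\sum_{n\ge 0}\bigl(\HF^1_{(X,0)}(n)-e_0(X,0)(n+1)+e_1(X,0)\bigr)$, is wrong: each summand is $\HF^1_{(X,0)}(n)-\HP^1_{(X,0)}(n)$, and for $\OX=\mathbb C\{t^2,t^3\}$ every summand vanishes, so the right-hand side is $0$ while $e_1(X,0)=\length(\OX'/\OX)=1$. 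The clean computation is rather: for $x$ a superficial element and $n\gg 0$ one has $\OX'=\max_{(X,0)}^n x^{-n}$, hence $\length(\OX'/\OX)=\length(\max_{(X,0)}^n/x^n\OX)=\length(\OX/x^n\OX)-\length(\OX/\max_{(X,0)}^n)=ne_0(X,0)-\bigl(ne_0(X,0)-e_1(X,0)\bigr)=e_1(X,0)$. Since you explicitly fall back on citing \cite{Nor59a} for this identity rather than relying on that computation, the flaw is not load-bearing, but as written that particular verification would fail.
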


\noindent
See \cite{Hir57} for the corresponding result for  non-irreducible curve singularities.

\medskip
We denote by $\Gamma_{(X,0)}\subset \mathbb N$ the semigroup of values of $(X,0)$: the set of integers  $v_t(f)=ord_t(t)$ where $f\in  \OX\setminus \{0\}$.
It is easy to see that $\delta(X,0)=\#(\mathbb N\setminus \Gamma_{(X,0)})$.
The multiplicity of $(X,0)$ is
$e_0(X,0)=\min\{v_t(f); f\in \max_X\setminus \{0\}\}$.

\medskip
Next, we recall the definition of the  Whitney's  $C_5(X,0)$ cone, see \cite[Section 3]{Whi65} and  \cite[Chapter IV]{BGG80},

\medskip
\begin{definition}
Let $(X,0)$ be a curve singularity of $\ori{n}$ with a parametrization  $x_i=f_i(t)$, $i=1,\dots, n$.
The cone $C_5(X,0)$ is the set of secant vectors to $(X,0)$: the set of vectors $w\in \mathbb C^n$ such that there exist two analytic functions $\alpha(u), \beta(u)\in \conu$ such that:
$$
w=\lim_{u\rightarrow 0}(f_i(\alpha(u))-f_i(\beta(u)))_{i=1,\dots,n}.
$$
\end{definition}

In \cite[Proposition IV.1]{BGG80}  the cone $C_5(X,0)$ is computed for a general reduced curve singularity.
They show that such a cone is a finite union of dimension two planes containing the tangent lines of $(X,0)$.
Next, we will explicitly describe $C_5(X,0)$ for a irreducible curve singularity  with a
parametrization presented in a standard form.

\medskip
\begin{proposition}
\label{cone5}
Let $(X,0)$ be a curve singularity of $\ori{n}$ with a standard parametrization, $e=e_0(X,0)$,
$$
\left\{
  \begin{array}{ll}
    x_1=t^e& \\
x_i= \sum_{j\ge e+1} b_{j,i} t^j& i=2,\dots,n\\
  \end{array}
\right.
$$
The cone $C_5(X,0)$ is the union of the dimension two planes
generated by $(1,0,\dots,0)\in\mathbb C^n$ and the vectors
$(0,b_{k,2},\dots,b_{k,n})\in \mathbb C^n\setminus\{0\}$ for which there exist $\epsilon\in\mathbb C$
with $\epsilon^e=1$ and $k$ is the first integer $j\ge e+1$, if it exists, for which
$ (\epsilon^{j}-1)b_{j,i}\neq 0$ for some integer $i\in\{2,\dots,n\}$.
\end{proposition}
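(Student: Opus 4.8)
The plan is to determine the limiting secant directions from the $u$-adic orders of the coordinates of $\bigl(f_{i}(\alpha(u))-f_{i}(\beta(u))\bigr)_{i}$, after normalising by the appropriate power of $u$ (so that $C_{5}(X,0)$ is read as the cone of such limiting directions). Write $a=\alpha(u)$ and $b=\beta(u)$ for two arcs in $\conu$ vanishing at the origin with $a\neq b$, put $e=e_{0}(X,0)$, and—after exchanging $a$ and $b$—assume $p:=\order_u(a)\le\order_u(b)$. Since $a=c\,u^{p}\bigl(1+u(\cdots)\bigr)$ with $c\neq0$ and the unit $1+u(\cdots)$ has an analytic $p$-th root, the substitution $u\mapsto c^{1/p}u\bigl(1+u(\cdots)\bigr)^{1/p}$ is an analytic reparametrisation after which $a=u^{p}$ and $b=\sum_{j\ge p}d_{j}u^{j}$. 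Writing $b/a=\epsilon+\theta u^{r}+\cdots$ (so $\epsilon=d_{p}$ when $\order_u(b)=p$ and $\epsilon=0$ otherwise) and using $a^{j}-b^{j}=a^{j}\bigl(1-(b/a)^{j}\bigr)$, one gets $\order_u(a^{j}-b^{j})=pj$ if $\epsilon^{j}\neq1$ and $\order_u(a^{j}-b^{j})=pj+r$ if $\epsilon^{j}=1$.

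First I would dispose of the configurations producing only the tangent line $(1,0,\dots,0)$ of $(X,0)$. If $\epsilon^{e}\neq1$ (in particular whenever $\order_u(b)>\order_u(a)$), then $\order_u\bigl(f_{1}(a)-f_{1}(b)\bigr)=\order_u(a^{e}-b^{e})=pe$, strictly smaller than $\order_u\bigl(f_{i}(a)-f_{i}(b)\bigr)$ for every $i\ge2$ because $f_{i}$ starts in degree $\ge e+1$; hence the normalised secant tends to $(1,0,\dots,0)$, which lies on every plane of the asserted union. The same conclusion holds when $\epsilon=1$, where $\order_u(a^{e}-b^{e})=pe+r$ is again strictly below the order of each $x_{i}$-coordinate. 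So only the case $\epsilon^{e}=1$, $\epsilon\neq1$, can contribute new directions.

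Now assume $\epsilon^{e}=1$ and $\epsilon\neq1$, and let $k=k(\epsilon)$ be the first integer $j\ge e+1$, if it exists, with $(\epsilon^{j}-1)b_{j,i}\neq0$ for some $i\in\{2,\dots,n\}$. For $j\ge e+1$ with $j<k$ and $b_{j,i}\neq0$ one necessarily has $\epsilon^{j}=1$, so such a term contributes to the $x_{i}$-coordinate only in order $pj+r$; since $j>e$ this is $>pe+r$. The exponent $j=k$ contributes in order $pk$, and exponents $j>k$ in order $>pk$. Comparing with $\order_u(a^{e}-b^{e})=pe+r$ leaves three regimes: if $pe+r<pk$ the $x_{1}$-coordinate dominates and the limit is $(1,0,\dots,0)$; if $pe+r>pk$ the $x_{i}$-coordinates dominate, with common leading term $(1-\epsilon^{k})u^{pk}(0,b_{k,2},\dots,b_{k,n})$, and the limit is $(0,b_{k,2},\dots,b_{k,n})$; if $pe+r=pk$ both dominate and the limit is $\bigl(-e\epsilon^{e-1}\theta,(1-\epsilon^{k})b_{k,2},\dots,(1-\epsilon^{k})b_{k,n}\bigr)$. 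In every case the direction lies in the plane $P_{\epsilon}:=\langle(1,0,\dots,0),(0,b_{k,2},\dots,b_{k,n})\rangle$. This gives $C_{5}(X,0)\subseteq\bigcup_{\epsilon}P_{\epsilon}$, the union over the $e$-th roots of unity $\epsilon\neq1$ admitting such a $k$ (if there are none, the same analysis shows $C_{5}(X,0)$ is the tangent line alone, e.g. for $e=1$).

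For the reverse inclusion, fix such an $\epsilon$, set $m=k-e+1$ (note $m\ge2$), and take $a=c\,u$, $b=\epsilon c\,u+\beta u^{m}$ with $c\in\mathbb C^{\ast}$ and $\beta\in\mathbb C$. A direct expansion yields $f_{1}(a)-f_{1}(b)=-e\,\epsilon^{e-1}\beta c^{e-1}u^{k}+\cdots$ and, for $i\ge2$, $f_{i}(a)-f_{i}(b)=(1-\epsilon^{k})b_{k,i}c^{k}u^{k}+\cdots$; normalising by $u^{k}$ and letting $c$ and $\beta$ vary, the limits sweep out all $(\lambda,\mu b_{k,2},\dots,\mu b_{k,n})$ with $\mu\in\mathbb C^{\ast}$ and $\lambda\in\mathbb C$, while the arcs $a=cu$, $b=c'u$ with $c^{e}\neq c'^{e}$ realise all $(\lambda,0,\dots,0)$; together these exhaust $P_{\epsilon}$. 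Hence $\bigcup_{\epsilon}P_{\epsilon}\subseteq C_{5}(X,0)$, and the two inclusions give the claim. The main obstacle is the order bookkeeping of the third paragraph: one must verify that the exponents $j<k$ with $\epsilon^{j}=1$, which genuinely occur in the expansions, never influence the limiting direction—their contributions sit in order $pj+r>pe+r$, hence strictly above whichever of the two competing orders $pe+r$ and $pk$ is the smaller—so that the transverse part of every limit is indeed a multiple of $(0,b_{k,2},\dots,b_{k,n})$ and of nothing else.
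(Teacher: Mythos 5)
Your argument is correct. The paper gives no proof of this proposition of its own---it simply points to the proof of Proposition IV.1 of \cite{BGG80}---so your computation is in effect a self-contained version of the standard argument from that reference: reduce to $a=u^{p}$, classify secant pairs by the expansion $b/a=\epsilon+\theta u^{r}+\cdots$, and compare the competing orders $pe+r$, $pk$ and $pj+r$. The one genuinely delicate point, namely that exponents $e<j<k$ with $\epsilon^{j}=1$ might contribute at the minimal order and tilt the transverse direction away from $(0,b_{k,2},\dots,b_{k,n})$, is correctly dispatched by your inequality $pj+r>pe+r\ge\min(pe+r,pk)$, and the explicit arcs $a=cu$, $b=\epsilon cu+\beta u^{k-e+1}$ together with $a=cu$, $b=c'u$ do sweep out each plane $P_{\epsilon}$ in full, so both inclusions are established.
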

\begin{proof}
See the proof of \cite[Proposition IV.1]{BGG80}.
\end{proof}

\medskip
\begin{remark}
Notice that from the  last result we get that the number of the irreducible components of $C_5(X,0)$ is at most $e_0(X,0)-1$.
\end{remark}

Given a set of non-negative integers $2\le a_1<\cdots <a_n$ such that $gcd(a_1,\cdots, a_n)=1$,
we consider the monomial curve singularity $(X,0)=M(a_1,\cdots, a_n)$ defined by the $\mathbb C$-algebra morphism
$\gamma: \con{n}\longrightarrow \mathbb C\{t\}$ with $\gamma(x_i)=t^{a_i}$, $i=1,\dots,n$,
i.e. $I_{(M(a_1,\cdots, a_n),0)}=\ker(\gamma)$.
The induced monomorphism
$$
\gamma:\OX=\con{n}/I_{M(a_1,\cdots, a_n)}\longrightarrow \cont
$$
is the normalization map.

\medskip
Next, we compute Whitney's  $C_5$  cone of  a monomial curve.

\medskip
\begin{proposition}
Given a monomial curve singularity $(X,0)=M(n_1,n_2,n_3)$ of $\ori{3}$.
\begin{enumerate}
\item[(i)] If $\gcd(n_1,n_2)=1$ then $C_5(X,0)$ is the plane generated by $(1,0,0)$ and $(0,1,0)$.
\item[(ii)]
If $\gcd(n_1,n_2)=a>1 $ then $C_5(X,0)$ is the union of the plane generated by $(1,0,0)$ and $(0,1,0)$ and the plane generated by
$(1,0,0)$ and $(0,0,1)$.
\end{enumerate}
\end{proposition}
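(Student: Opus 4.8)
The plan is to read $C_5(X,0)$ off from \propref{cone5}. Since $n_1<n_2<n_3$, the multiplicity is $e:=e_0(X,0)=n_1$ and the given presentation $x_1=t^{n_1}$, $x_2=t^{n_2}$, $x_3=t^{n_3}$ is already a standard parametrization whose only nonzero coefficients, in the notation of \propref{cone5}, are $b_{n_2,2}=1$ and $b_{n_3,3}=1$ (here $n_2,n_3\ge e+1$ because $n_2,n_3>n_1$). Hence, for each $n_1$-th root of unity $\epsilon$, the integer $k$ of \propref{cone5} --- the first $j\ge n_1+1$ with $(\epsilon^{j}-1)b_{j,i}\neq 0$ for some $i\in\{2,3\}$ --- can only be $n_2$ or $n_3$. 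I would run through the three possibilities: $k=n_2$, with secant vector $(0,1,0)$, occurs exactly when $\epsilon^{n_2}\neq 1$; $k=n_3$, with secant vector $(0,0,1)$, occurs exactly when $\epsilon^{n_2}=1$ and $\epsilon^{n_3}\neq 1$; and no such $k$ exists exactly when $\epsilon^{n_1}=\epsilon^{n_2}=\epsilon^{n_3}=1$, which forces $\mathrm{ord}(\epsilon)\mid\gcd(n_1,n_2,n_3)=1$, that is $\epsilon=1$. Writing $P_{12}=\langle(1,0,0),(0,1,0)\rangle$ and $P_{13}=\langle(1,0,0),(0,0,1)\rangle$, \propref{cone5} then tells us that $C_5(X,0)$ is the union of whichever of $P_{12},P_{13}$ actually occur: $P_{12}$ occurs iff some $n_1$-th root of unity $\epsilon\neq 1$ has $\epsilon^{n_2}\neq 1$, and $P_{13}$ occurs iff some $n_1$-th root of unity $\epsilon\neq 1$ has $\epsilon^{n_2}=1$.

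The remaining step is a short computation with roots of unity. The $n_1$-th roots of unity $\epsilon$ with $\epsilon^{n_2}=1$ are precisely the $\gcd(n_1,n_2)$-th roots of unity, so there is an $\epsilon\neq 1$ with $\epsilon^{n_1}=\epsilon^{n_2}=1$ iff $\gcd(n_1,n_2)>1$, and there is an $\epsilon\neq 1$ with $\epsilon^{n_1}=1\neq\epsilon^{n_2}$ iff $\gcd(n_1,n_2)<n_1$, that is iff $n_1\nmid n_2$; and $n_1\nmid n_2$ always holds here, since $n_1\mid n_2$ would impose the relation $x_2-x_1^{n_2/n_1}=0$ on $(X,0)$ and lower its embedding dimension below $3$. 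Feeding this back: if $\gcd(n_1,n_2)=1$ then $P_{13}$ does not occur while $P_{12}$ does (because $n_1\ge 2$), so $C_5(X,0)=P_{12}$, which is (i); if $\gcd(n_1,n_2)=a>1$ then $P_{13}$ occurs and, since then $a<n_1$, so does $P_{12}$, giving $C_5(X,0)=P_{12}\cup P_{13}$, which is (ii).

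I expect the only real work to be in the first paragraph: transcribing \propref{cone5} correctly for the standard monomial parametrization and checking that no planes beyond $P_{12}$ and $P_{13}$ can arise; after that the case division is immediate. The hypothesis worth flagging is $n_1\nmid n_2$ (equivalently, that $n_1,n_2,n_3$ minimally generate the value semigroup of $(X,0)$): it is automatic for a genuine space curve, and it is exactly what makes $P_{12}$ appear in case (ii) --- dropping it would collapse $C_5(X,0)$ to the single plane $P_{13}$.
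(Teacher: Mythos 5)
Your proof is correct and follows essentially the same route as the paper: both read $C_5(X,0)$ directly off Proposition~\ref{cone5} and do the case analysis over the $n_1$-th roots of unity. You are in fact slightly more careful than the paper's own one-paragraph argument, since you make explicit the hypothesis $n_1\nmid n_2$ (needed for the plane $\langle(1,0,0),(0,1,0)\rangle$ to actually appear in case (ii)) and your computation implicitly corrects the paper's condition ``$\gcd(a,n_3)\neq 1$'', which should read $\gcd(a,n_3)=1$.
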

\begin{proof}
If $\gcd(n_1,n_2)=1$ then from \propref{cone5} we get that $C_5(X,0)$ is the plane generated by $(1,0,0)$ and $(0,1,0)$.
Assume now that $\gcd(n_1,n_2)=a>1$.
Let $\xi$ be a $n_1$-th unit root.
If $\xi^a\neq 1$ then we deduce that  $C_5(X,0)$ contains the plane generated by $(1,0,0)$ and $(0,1,0)$.
Assume that $\xi^a=1$; since $\gcd(a,n_3)\neq1$ from \propref{cone5} we get that $C_5(X,0)$ contains the plane generated by $(1,0,0)$ and $(0,0,1)$.
\end{proof}

\medskip
\begin{example}
\label{exC5}
Let us consider the monomial curve $(X,0)$ of $\ori{3}$
defined by the standard parametrization $x_1=t^4,  x_2=t^6, x_3=t^7$.
We know that the defining ideal of $(X,0)$ is $I_{(X,0)}=(x_1^3-x_2^2,x_3^2-x_1^2x_2)$.
We have $e_0(X,0)=4$ and the $4$-th unit roots are: $\{1,-1,\imath, -\imath \}$.
Then $C_5(X,0)$ is the cone union of the two $2$-dimensional planes generated by
$(1,0,0)$, defining the  tangent line of $(X,0)$,  and $(0,1,0)$, for  $\epsilon=\imath, -\imath$, and the plane generated by
$(1,0,0)$ and $(0,0,1)$, for  $\epsilon=-1$.
\end{example}

\medskip
We denote by $\mathcal G(n,n-2)$ the Grassmannian of the dimension $n-2$ linear varieties of $\ori{n}$.
For all $H\in \mathcal G(n,n-2)$ we denote by $\pi_H$ the projection to $\ori{2}$ parallel to $H$.
We denote by $\mathcal W_{(X,0)}$ the open set  of $H\in \mathcal G(n,n-2)$ such that $H\cap C_5(X,0)=\{0\}$.

Next, we recall \cite[Proposition IV.1]{BGG80} and we sketch the proof that we will use later on.
For the theory of equisingularity for plane curve curves see \cite{Zar65c}, or  \cite{BGG80} and \cite{GLS07}.

\medskip
\begin{proposition}
\label{generic-proj}
For all $H\in \mathcal W_{(X,0)}$ the image $\pi_H(X,0)$ is a germ of  a reduced curve singularity with constant equisingularity type.
\end{proposition}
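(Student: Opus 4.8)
\emph{Proof strategy.}
The plan is to reduce the statement to the constancy of the value semigroup of the projected branch, and then to compute that semigroup from a standard parametrisation of $(X,0)$ together with the description of $C_5(X,0)$ given in \propref{cone5}.

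First I would fix $H\in\mathcal W_{(X,0)}$, set $(Y,0):=\pi_H(X,0)$, and check that $(Y,0)$ is an irreducible reduced plane curve germ whose normalization is $\cont$, realised by the arc $t\mapsto\pi_H(f_1(t),\dots,f_n(t))$. The fibres of $\pi_H$ are translates of the linear space $H$; if $\pi_H$ identified two distinct points of $(X,0)$ along arbitrarily short arcs, the curve selection lemma would furnish $\alpha(u)\neq\beta(u)$ in $\conu$ with $\pi_H(f_i(\alpha(u)))=\pi_H(f_i(\beta(u)))$ for all $i$, so the difference vectors $(f_i(\alpha(u))-f_i(\beta(u)))_i$ would stay in $H$ and the corresponding limit of normalised secant directions would give a nonzero vector of $H\cap C_5(X,0)$, contradicting $H\in\mathcal W_{(X,0)}$; hence $\pi_H$ is generically injective on $(X,0)$. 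Since moreover the tangent line of $(X,0)$ lies in $C_5(X,0)$, $H$ does not contain it and $\pi_H$ does not drop the order of the parametrisation, so $e_0(Y,0)=e_0(X,0)=:e$ and $\mathcal O_{(Y,0)}\subseteq\OX\subseteq\cont$. By Zariski's theory of equisingularity of plane branches (\cite{Zar65c}; see also \cite{BGG80}, \cite{GLS07}) the equisingularity type of the irreducible plane germ $(Y,0)$ is determined by its semigroup of values, equivalently by its characteristic exponents, so it suffices to show these do not depend on $H\in\mathcal W_{(X,0)}$.

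To compute them I would put the parametrisation of $(X,0)$ in standard form, $x_1=t^e$, $x_i=\sum_{j>e}b_{j,i}t^j$, and (after a linear change of coordinates in $\mathbb C^2$, using that $H$ misses the tangent line) write $\pi_H=(\ell_1,\ell_2)$ with $\ell_1=x_1+\sum_{i\ge2}\alpha_i x_i$ and $\ell_2=\sum_{i\ge2}\beta_i x_i$. Reparametrising $t$ so that $\ell_1(f_1(t),\dots,f_n(t))$ becomes a perfect power $s^e$, the germ $(Y,0)$ acquires the parametrisation $(s^e,\psi_H(s))$ with $\psi_H(s)=\sum_{j>e}c_j\,s^j v(s)^j$, where $c_j=\sum_{i\ge2}\beta_i b_{j,i}$ and $v(s)=1+(\text{h.o.t.})$ is the reparametrising unit, whose nonconstant part is supported on the numerical semigroup generated by the integers $m-e$ with $\vec b_m:=(b_{m,2},\dots,b_{m,n})\neq 0$. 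From the $\vec b_j$ I would then define $\beta_0=e$, $k_1:=\min\{j>e:\vec b_j\neq0,\ e\nmid j\}$, $e_1:=\gcd(e,k_1)$, and inductively $k_{r+1}:=\min\{j:\vec b_j\neq0,\ e_r\nmid j\}$ and $e_{r+1}:=\gcd(e_r,k_{r+1})$, stopping at the first $g$ with $e_g=1$ (which exists since the value semigroup of $(X,0)$ has $\gcd$ one). By the proof of \propref{cone5} the distinguished two-dimensional components of $C_5(X,0)$ are exactly the planes $P_{k_r}=\langle(1,0,\dots,0),(0,\vec b_{k_r})\rangle$ for $1\le r\le g$; and the same linear algebra as in \propref{cone5} shows that $H\cap P_{k_r}=\{0\}$ if and only if $c_{k_r}=\sum_{i\ge2}\beta_i b_{k_r,i}\neq0$, since in the basis $(1,0,\dots,0),(0,\vec b_{k_r})$ of $P_{k_r}$ the forms $\ell_1,\ell_2$ are represented by the rows $(1,*)$ and $(0,c_{k_r})$, of determinant $c_{k_r}$. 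Granting this, I claim the characteristic exponents of $(Y,0)$ are exactly $e,k_1,\dots,k_g$, by induction on $r$: if $\beta_0,\dots,\beta_r$ are $e,k_1,\dots,k_r$ then $\gcd(\beta_0,\dots,\beta_r)=e_r$ and the next characteristic exponent is the least $s$-exponent in $\psi_H$ prime to $e_r$; since every $j<k_{r+1}$ with $\vec b_j\neq0$ satisfies $e_r\mid j$, and the support of $v$ consists of sums of integers $m-e$ with $\vec b_m\neq0$, the series $\psi_H$ has no term of exponent $<k_{r+1}$ prime to $e_r$ while its coefficient of $s^{k_{r+1}}$ equals $c_{k_{r+1}}\neq0$ (as $H\cap P_{k_{r+1}}=\{0\}$). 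Hence $(e,k_1,\dots,k_g)$ are the characteristic exponents of $\pi_H(X,0)$; they are intrinsic to $(X,0)$, so $\pi_H(X,0)$ has constant equisingularity type on $\mathcal W_{(X,0)}$.

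The hard part is precisely this last inductive step: one must control the interaction of the reparametrising unit $v(s)$ with the terms of $\psi_H$ whose exponent is divisible by $e_r$, checking that they produce no spurious smaller exponent prime to $e_r$ and do not cancel the coefficient $c_{k_{r+1}}$. Careful bookkeeping of the supports reduces this to the combinatorial fact, built into the definition of the $k_r$, that $e_r$ divides every exponent $<k_{r+1}$ at which some $\vec b_j$ is nonzero. This is the content of \cite[Proposition IV.1]{BGG80}, which the paper cites.
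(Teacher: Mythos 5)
Your proposal is correct in substance, but it takes a genuinely different route from the paper. The paper proves nothing fibrewise: it recalls the proof of \cite[Proposition IV.1]{BGG80}, assembling all projections $\pi_L$, $L\in W_{(X,0)}$, into one finite morphism $\pi:(X,0)\times W\to \mathbb C^2\times W$ whose image is a hypersurface $(\widehat F=0)$, and observing that the projection $\gamma$ to $(W,L)$ is a flat deformation of $\pi_L(X,0)$ admitting a simultaneous resolution and a parametrization in family; constancy of the Milnor number, hence of the equisingularity type, follows. That global formulation is not optional for the paper: the equation $\widehat F$ and the parametrization in family are exactly the inputs to \thmref{matfact}. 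You instead fix one $H$ at a time and compute the characteristic exponents of $\pi_H(X,0)$ directly from a standard parametrization, identifying them with the intrinsic sequence $(e;k_1,\dots,k_g)$; this is more elementary and gives an explicit formula for $\overline{\mu}(X,0)$-type data, but produces no family. Your key reductions do check out: for every divisor $d>1$ of $e$ the integer $\min\{j:\vec b_j\neq 0,\ d\nmid j\}$ coincides with one of your $k_r$ (so your planes $P_{k_r}$ exhaust the components of $C_5(X,0)$ listed in \propref{cone5}), the determinant computation showing $H\cap P_{k_r}=\{0\}\Leftrightarrow c_{k_r}\neq 0$ is right, and the bookkeeping in the inductive step closes: every exponent of $\psi_H$ below $k_{r+1}$ has the form $j+\sigma$ with $e_r\mid j$ and $\sigma$ a sum of integers $m-e$ with $\vec b_m\neq 0$ and $m<k_{r+1}$ (terms with $m\ge k_{r+1}$ already give exponents exceeding $k_{r+1}$ since $j>e$), hence is divisible by $e_r$, while the coefficient of $s^{k_{r+1}}$ receives no contribution other than $c_{k_{r+1}}$. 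Two small repairs: throughout you should say ``not divisible by $e_r$'' rather than ``prime to $e_r$'' (the characteristic exponent condition is non-divisibility by $\gcd(\beta_0,\dots,\beta_r)$, not coprimality), and you should state explicitly that $e_g=1$ because the given parametrization of $(X,0)$ is primitive --- this is also what makes $(s^e,\psi_H(s))$ primitive and renders your curve-selection argument for generic injectivity superfluous.
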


Next, we recall how this result was proved; see  \cite[Proposition IV.1]{BGG80} for the details.
We denote by $W=W_{(X,0)}$ the Zariski open subset of $\mathbb C^{2n}$, with coordinates $z_1,\dots , z_{2n}$, parameterizing the pairs of linear forms
$(L_1,L_2)$ with $L_1=z_1 x_1+\cdots +z_nx_n$,
$L_2=z_{n+1} x_1+\cdots +z_{2n}x_n$
such that the linear variety defined by them is of dimension $n-2$ and belonging to  $\mathcal W_{(X,0)}$.
For each $L=(z_1,\dots,z_{2n})\in W$ we denote by
$$
\begin{array}{ccccc}
  \pi_L: & \mathbb C^n & \longrightarrow  & \mathbb C^2 \\
         &(x_1,\dots, x_n)     &  \mapsto         & (L_1,L_2)
\end{array}
$$
the projection defined by the dimension $n-2$ linear variety $L_1=L_2=0$.
The image of $(X,0)$ is a  plane curve singularity $\pi_L(X,0)\subset \ori{2}$.
We denote by $\pi$ the  induced morphism
$$
\begin{array}{ccccc}
  \pi: & \mathbb C^n \times W & \longrightarrow  & \mathbb C^2 \times W\\
         &((x_1,\dots, x_n), L)     &  \mapsto         & (\pi_L(x_1,\dots, x_n),L)
\end{array}
$$

\noindent
Given $L=(c_1,\dots,c_{2n})\in W$ the restriction of $\pi$ to $(X,0)\times W$ if a finite morphism in $(0,L)$, see proof of \cite[Proposition IV.1]{BGG80}.
The image
$\pi((X,0)\times W,(0,L))$ is a hypersurface  of $\ori{2}\times (W,L)$
defined by a principal ideal
$(F(x,y;z_1-c_1,\dots,z_{2n}-c_{2n}))\subset  \mathbb C\{x,y;z_1-c_1,\dots,z_{2n}-c_{2n}\}
\cong \mathcal O_{\ori{2}\times (W,L)}$.
Then we have a $\mathbb C$-algebra monomorphism
\begin{equation*}
\label{eqs}
\mathcal O_{\pi((X,0)\times W,(0,L)),(0,L)}\cong
\frac{\mathbb C\{x,y;z_1-c_1,\dots,z_{2n}-c_{2n}\}}{(F(x,y;z_1-c_1,\dots,z_{2n}-c_{2n}))}\stackrel{\pi^*}{\longrightarrow}
\mathcal O_{X\times W, (0,L)}
\end{equation*}
such that tensoring by \quad
$\cdot \otimes_{\mathbb C\{x,y;z_1-c_1,\dots,z_{2n}-c_{2n}\}}\frac{\mathbb C\{x,y;z_1-c_1,\dots,z_{2n}-c_{2n}\}}{(z_1-c_1,\dots,z_{2n}-c_{2n})}$
we get the morphism
 \begin{equation*}
\label{eqsL}
\mathcal O_{\pi_L(X,0)}
\stackrel{\pi^*_L}{\longrightarrow}
\mathcal O_{X,0},
\end{equation*}
 morphism induced by the projection
$
(X,0)\stackrel{\pi_L}{\longrightarrow}  \pi_L(X,0).
$
Moreover,
the  projection into the second component
$$
\gamma: \pi((X,0)\times W,(0,L)) \longrightarrow (W,L)
$$
defines
a flat deformation of $\pi_L(X,0)$ whose fibers are curve singularities with constant Milnor number.
This flat deformation $\gamma$ admits a resolution in family,
this means that  if
$\nu: \overline{\pi((X,0)\times W,(0,L))}\longrightarrow \pi((X,0)\times W,(0,L))$ is the normalization map
then the composition
$$
G= \gamma \circ \nu: \overline{\pi((X,0)\times W,(0,L))}
\stackrel{\nu}{\longrightarrow}\pi((X,0)\times W,(0,L))
\stackrel{\gamma}{\longrightarrow} (W,L)
$$
satisfies
$G^{-1}(p)=\overline{\pi_p(X,0)}$
for a $p$ belonging to a small neighbourhood of $L$ in $W$.
Given a parametrization
$$
\left\{
 \begin{array}{ll}
x_1=f_1(t)&\\
\vdots & \\
x_n=f_n(t)
 \end{array}
\right.
$$
of $(X,0)$ the flat deformation $\gamma$
admits a parametrization in family.
This means that the normalization map
$\overline{\pi_p(X,0)}\longrightarrow \pi_p(X,0) $ is defined, after a change of variables, by
$$
\left\{
  \begin{array}{ll}
x=\sum_{j=1}^{n} (s_j+c_j) f_j(t)& \\ \\
y=\sum_{j=n+1}^{2n} (s_{j}+c_j) f_{j-n}(t)\\
 \end{array}
\right.
$$
for small $s_1,\dots ,s_{2n}$, \cite{Tei77}, \cite{Tei80a}.
Furthermore, we have a commutative  diagram where $\nu$ and $\nu_X$ are the corresponding normalization morphisms
\begin{center}
\begin{tikzcd}[column sep=large]
\mathcal O_{(X,0)\times W,(0,L)} \arrow[ r,hook,  "(\nu_X\times Id_W)^*"]  &\overline{\mathcal O_{(X,0)\times W,(0,L)} }\cong\overline{\mathcal O_{\pi((X,0)\times W,(0,L)),(0,L)}}  \\
\mathcal O_{\pi((X,0)\times W,(0,L)),(0,L)} \arrow[u, hook, "\pi^*"'] \arrow[ ur, hook, "\nu^*"]&
\end{tikzcd}
\end{center}
notice that $\nu^*_X$ is defined by the given parametrization of $(X,0)$ and  $\nu^*$ is defined by above parametrization in family.

\medskip
\begin{definition}
\label{gen-proj}
A generic plane projection  of a curve singularity $(X,0)$ is $\pi_H(X,0)$ for any $H\in \mathcal W_{(X,0)}$.
All these plane curve singularities share the same equisingularity type, \propref{generic-proj}.
We denote by $\overline{\mu}(X,0)$ the Milnor number of a generic plane projection of $(X,0)$.
\end{definition}

\medskip
\begin{remark}
A classical problem considered by F. Enriques and O. Chisini was to compare the equisingularity type of $(X,0)$ and the equisingularity type of a generic plane projection $(\pi_L(X,0),0)$, see \cite[page 12]{Zar71}.
This problem was addressed  in \cite{Cas78} and  \cite{CC82}.
On the other hand, in \cite{Eli88}  we  gave a sharp upper bound of the singularity order of  $(\pi_L(X,0),0)$, $L$ generic, in terms of the  singularity order of $(X,0)$,
we proved that
$$
\delta(X,0)\le \delta(\pi_L(X,0),0)\le
(e_0(X,0)-1)\delta(X,0)-\binom{e_0(X,0)-1}{2}.
$$
\end{remark}

\medskip
\begin{example}
\label{exC5def}
Let us consider \exref{exC5}.
We take coordinates $z_1,z_2,z_3,z_4,z_5,z_6$  in $\mathbb C^6$ parameterizing
the pairs of linear forms of $\mathbb C\{x_1,x_2,x_3\}$.
In this case we have that the Zariski open set
$W_{(X,0)}\subset \mathbb C^{6}$ is defined by the conditions $z_1z_6-z_3z_4\neq 0$ and $z_1z_5-z_2z_4\neq 0$.
Next, we study $\pi_L(X,0)$ in a neighbourhood of $L=(1,0,0,0,1,1)\in W_{(X,0)}$.
The parametrization in family is
$$
\left\{
  \begin{array}{ll}
x=  (1+s_1) t^4 + s_2 t^6+ s_3 t^7& \\
y=   s_4 t^4+ (1+s_5) t^6+ (1+s_6)t^7.&\\
  \end{array}
\right.
$$
for small $s_1,\dots ,s_{2n}\in \mathbb C$.
In order to get "small"  equations we consider the sub-family defined by $s_1=s_2=s_3=s_4=s_5=0$
$$
\left\{
  \begin{array}{ll}
x=   t^4& \\
y=   t^6+ (1+s_6)t^7&\\
  \end{array}
\right.
$$
for a small $s_6$.
We can compute the equation $F$  eliminating the variable $t$ of the ideal
$(x- t^4, y-t^6- (1+s_6)t^7)$,\cite{DGPS},
\begin{eqnarray*}
F&=& y^4-2x^3y^2+y^4s_6+x^6-4x^5y-2x^3y^2s_6-x^7+x^6s_6-12x^5ys_6-5x^7s_6
 \\
 && -12x^5ys_6^2-10x^7s_6^2-4x^5ys_6^3-10x^7s_6^3-5x^7s_6^4-x^7s_6^5.
\end{eqnarray*}
Notice that, in particular,
$F(x,y,0)=0$ is the equation of the ideal of the monomial curve singularity $(X,0)$.
Hence we have a commutative diagram
\begin{center}
\begin{tikzcd}[column sep=large]
\frac{\mathbb C\{x_1,\dots,x_n,s_6\}}{I_{(X,0)}\mathbb C\{x_1,\dots,x_n,s_6\}}\arrow[ r,hook, "(\nu_X\times Id_W)^*"]  &  \mathbb C\{t,s_6\}\\
\frac{\mathbb C\{x,y,s_6\}}{(F)} \arrow[u, hook, "\pi^*"'] \arrow[ ur,hook, "\nu^*"]&
\end{tikzcd}
\end{center}
where $\nu_X^*(x_1)=t^4$,  $\nu_X^*(x_2)=t^6$, $\nu_X^*(x_3)=t^7$, and $\nu^*(x)=t^4$ and $\nu^*(y)=t^6+(1+s_6)t^7$.

Since the fibers of $\gamma$ are plane curves  we can compute their Milnor number as  follows.
Provided $s_6$ small the multiplicity sequence of the resolution process of the fiber $\gamma^{ -1}(s_6)$
is $\{4,2,2,1,\dots\}$, so $\mu(\gamma^{ -1}(s_6),0)=16$,  \propref{basic} (iii) and \propref{basic2}.
Hence $\overline{\mu}(X,0)=16$.
\end{example}

\medskip
\begin{example}
\label{anaType}
In \propref{generic-proj} we recall that all generic plane projections share the same equisingularity type.
In the following example we show that the analytic type of the generic plane projections are not constant.
We consider the example $4$ of \cite[Chapter V]{Zar06},
$(X,0)=M(5,6,8,9)\subset \ori{4}$.
By using \propref{cone5} we get that $\mathcal W_{(X,0)}$ is the set of planes through the origin of $\mathbb C^4$ transversal to $x_2=x_3=0$.
Let us consider the plane $L(a_8,a_9)$ defined by the linear forms $x_1$ and $x_2+a_8 x_3 + a_9 x_4$, with $a_8\neq 0$ and $a_9\neq 0$, we have that
$L(a_8,a_9)\in \mathcal W_{(X,0)}$.
The projection $\pi_{L(a_8,a_9)}(X,0)$ has the following parametrization
$$
\left\{
 \begin{array}{ll}
x=t^5&\\
y=t^6+a_8t^8+a_9t^9.&
 \end{array}
\right.
$$
From \cite[Proposition 4.1, Chapter V]{Zar06}
we have that two of such projections with respect to $L(a_8,a_9)$ and $L(b_8,b_9)$ are analytically isomorphic if and only if
$a_8^3/a_9^2=b_8^3/b_9^2$.
Hence there are infinitely many analytic isomorphism classes of generic plane projections of $(X,0)$.
The equisingularity type of these curve singularities is constant and  it is defined by the set of characteristic exponents
$\{6/5, 8/5, 9/5\}$, \cite{Zar65c} or \cite{BGG80}.
\end{example}

\medskip
\section{Matrix decomposition and the generic plane projection}

In this section we study the family of matrix factorizations naturally  appearing when we consider the generic plane projections of a curve singularity, \cite{Eis80}, \cite{EP16}.
The key ingredient is that the generic plane projection define a flat family with constant equisingularity type, see proof of \propref{generic-proj}.
We construct a "family" of matrix factorizations with closed fiber a matrix factorization of the initial germ of a curve singularity.

As a motivation we start describing  a concrete example of matrix factorization.
We construct a matrix factorization of the monomial curve singularity of \exref{exC5}.

\medskip
\begin{example}
\label{exC5MF}
Let us consider the ring $\mathcal O_{(X,0)}=\mathbb C\{x_1,x_2,x_3\}/I_{(X,0)}$, \exref{exC5}.
After the $\mathbb C$-algebra isomorphism  of $\mathbb C\{x_1,x_2,x_3\}$
$$
\phi:
\left\{
  \begin{array}{ll}
   \phi(x_1)&=x \\
    \phi(x_2)&=y-z \\
\phi(x_3)&=z
  \end{array}
\right.
$$
we get that $\mathcal O_{(X,0)}\cong M_X:=\mathbb C\{x,y,z\}/J$
where $J$ is the ideal generated by
$x^3-(y-z)^2$ and $z^2-x^2(y-z)$.
The $\mathbb C\{x,y\}$-module $M_X$ is generated by $1$ and $\overline{z}$, so we have a complex of
$\mathbb C\{x,y\}$-modules
$$
0\longrightarrow
\mathbb C\{x,y\}^2 \stackrel{d}{\longrightarrow}
\mathbb C\{x,y\}^2 \stackrel{(1,\overline{z})}{\longrightarrow} M_X\longrightarrow 0
$$
where
$$
d=
\left(
  \begin{array}{cc}
    x^3-y^2-x^2y & x^4y+2x^2y^2 \\
    x^2+2y       & x^3-x^4-3x^2y-y^2 \\
  \end{array}
\right).
$$
With the help of Singular we can show that this complex is exact, \cite{DGPS}.
Moreover, let us consider the morphism
$h:
\mathbb C\{x,y\}^2 \longrightarrow
\mathbb C\{x,y\}^2
$
defined by  the adjoint matrix of  $d$.
Then we can check that
$$
h d= d h=F \cdot \mathrm{Id}_{\mathbb C\{x,y\}^2}
$$
where
$F=y^4-2x^3y^2+x^6-4x^5y-x^7$
is the equation of the plane projection
$$
\pi_L(X,0):
\left\{
  \begin{array}{l}
    x =t^4 \\
    y =t^6+t^7
  \end{array}
\right.
$$
with $L=(1,0,0,0,1,1)\in W_{(X,0)}$.
Hence the pair $(d,h)$ is a matrix factorization of $F$, see  \cite[Section 1.2]{EP16}.
Then we have a minimal periodic two free resolution of the
$\mathcal O_{\pi_L(X,0)}=\mathbb C \{x,y\}/(F)$-module $M_X$
\begin{multline*}
\cdots \longrightarrow
\mathcal O_{(\pi_L(X,0),0)}^2 \stackrel{h}{\longrightarrow}
\mathcal O_{(\pi_L(X,0),0)}^2 \stackrel{d}{\longrightarrow}
\mathcal O_{(\pi_L(X,0),0)}^2 \stackrel{h}{\longrightarrow}\\
\stackrel{h}{\longrightarrow}\mathcal O_{(\pi_L(X,0),0)}^2 \stackrel{d}{\longrightarrow}
\mathcal O_{(\pi_L(X,0),0)}^2 \stackrel{(1,\overline{z})}{\longrightarrow} M_X\longrightarrow 0,
\end{multline*}
\cite[Theorem 2.1.1]{EP16}.
\end{example}

\medskip
In the next statement we use  the notations of the proof of \propref{generic-proj}.
For all $L\in W_{(X,0)}$ we consider $\mathcal O_{X\times W,(0,L)}$ as a
$\mathcal O_{\pi(X\times W,(0,L)),(0,L)}$-module via $\pi^*$, and
 $\mathcal O_{(X,0)}$ as
$\mathcal O_{(\pi_L(X,0),0)}$-module via $\pi_L^*$.

\medskip
\begin{theorem}
\label{matfact}
Let $(X,0)$ be a curve singularity of $\ori{n}$.
Let $b$ the minimal number of generators of $\mathcal O_{(X,0)}$ as
$\mathcal O_{(\pi_L(X,0),0)}$-module.
Then for all  $L=(c_1,\dots,c_{2n})\in W_{(X,0)}$ there is
$\widehat{F}=F(x,y;z_1-c_1,\dots,z_{2n}-c_{2n})\in R=\mathbb C\{x,y;z_1-c_1,\dots,z_{2n}-c_{2n}\}$
such that
 $\mathcal O=\mathcal O_{\pi((X,0)\times W,(0,L)),(0,L)}\cong \frac{R}{(\widehat{F})}$
and a matrix factorization of $\widehat{F}$
$$
\widehat{d}\; \; \widehat{h}=  \widehat{h}\; \widehat{d}=\widehat{F} \cdot \mathrm{Id}_{R^b}
$$
with
$\widehat{d}, \widehat{h}\in \mathrm{Mat}_{b\times b}(R)$ such that
$$
0 \longrightarrow
R^b\stackrel{\widehat{d}}{\longrightarrow}
R^b\longrightarrow
\mathcal O_{(X,0)\times W,(0,L)}\longrightarrow 0
$$
is exact.
Moreover,
there is a minimal free resolution of $\mathcal O_{(X,0)\times W,(0,L)}$ as
$\mathcal O$-module
$$
 \cdots \longrightarrow
\mathcal O^b \stackrel{\widehat{h}}{\longrightarrow}
\mathcal O^b \stackrel{\widehat{d}}{\longrightarrow}
\mathcal O^b \stackrel{\widehat{h}}{\longrightarrow}
\mathcal O^b \stackrel{\widehat{d}}{\longrightarrow}
\mathcal O^b \longrightarrow
\mathcal O_{(X,0)\times W,(0,L)}
\longrightarrow 0
$$
such that tensoring by
$ \cdot \otimes_{R} \frac{R}{(z_1-c_1,\dots,z_{2n}-c_{2n})}$
we get a minimal free resolution of
$\mathcal O_{(\pi_L(X,0),0)}$-modules
\begin{multline*}
 \cdots \longrightarrow
\mathcal O_{(\pi_L(X,0),0)}^b \stackrel{h}{\longrightarrow}
\mathcal O_{(\pi_L(X,0),0)}^b \stackrel{d}{\longrightarrow}
\mathcal O_{(\pi_L(X,0),0)}^b \stackrel{h}{\longrightarrow}\\
 \stackrel{h}{\longrightarrow}\mathcal O_{(\pi_L(X,0),0)}^b \stackrel{d}{\longrightarrow}
\mathcal O_{(\pi_L(X,0),0)}^b \longrightarrow
\mathcal O_{(X,0)}
\longrightarrow 0
\end{multline*}
with $h, d$ the induced morphisms by $\widehat{d}, \widehat{h}$.
In particular, the pair $d,h$ is a matrix factorization of $F(x,y;0,\dots,0)$, equation of
$(\pi_L(X,0),0)$.
\end{theorem}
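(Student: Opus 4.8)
The plan is to leverage the flat family $\gamma$ from the proof of \propref{generic-proj} together with Eisenbud's matrix factorization theorem \cite{Eis80}, checking that the construction is compatible with the specialization $z_i \mapsto c_i$. First I would set up the algebraic picture: write $\mathcal O = \mathcal O_{\pi((X,0)\times W,(0,L)),(0,L)} \cong R/(\widehat F)$ as in the displayed monomorphism in \secref{} preceding \defref{gen-proj}, and observe that $M := \mathcal O_{(X,0)\times W,(0,L)}$ is a finitely generated $\mathcal O$-module via $\pi^*$. The crucial point is that $\pi$ restricted to $(X,0)\times W$ is finite at $(0,L)$ (cited from the proof of \cite[Proposition IV.1]{BGG80}), so $M$ is a finite $R$-module annihilated by $\widehat F$, and since $(X,0)\times W$ is reduced of dimension $\dim W + 1$ and the hypersurface $\{\widehat F = 0\}$ has the same dimension, $M$ is in fact a maximal Cohen--Macaulay $\mathcal O$-module; equivalently it has depth equal to $\dim R - 1$ as an $R$-module. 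This is the input Eisenbud's theorem needs.

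Next I would produce the presentation over $R$. Since $M$ is finite over the regular local ring $R$ with $\operatorname{depth}_R M = \dim R - 1 = \operatorname{pd}_R M$ being $1$ by Auslander--Buchsbaum, $M$ has an $R$-free resolution of length $1$:
\[
0 \longrightarrow R^{b'} \stackrel{\widehat d}{\longrightarrow} R^{b} \longrightarrow M \longrightarrow 0 .
\]
One checks $b' = b$: localizing at the generic points of $\{\widehat F = 0\}$ (or reducing mod $\max_W$ and using that over $\mathbb C\{x,y\}$ the module $\mathcal O_{(X,0)}$ has rank $0$ and a square presentation, as in \exref{exC5MF}), the map $\widehat d$ has determinant a non-zerodivisor, forcing the two ranks to agree; here $b = \beta_{\mathcal O_{(\pi_L(X,0),0)}}(\mathcal O_{(X,0)})$ is the stated minimal number of generators, which is preserved because $R \to \mathcal O_{(\pi_L(X,0),0)}$ is a quotient by a regular sequence and Nakayama applies. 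Because $\widehat F$ annihilates $M = \operatorname{coker}\widehat d$, every entry of $\widehat F \cdot \mathrm{Id}$ lies in the image of $\widehat d$, i.e. $\widehat F \cdot \mathrm{Id} = \widehat d\, \widehat h$ for some $\widehat h \in \mathrm{Mat}_{b\times b}(R)$; since $R$ is a domain and $\widehat d$ is injective on a free module, multiplying $\widehat d(\widehat h \widehat d - \widehat F\,\mathrm{Id}) = 0$ gives $\widehat h \widehat d = \widehat F\,\mathrm{Id}$ as well. This is exactly the matrix factorization, and entrywise it lies in $\max_R$ because the resolution is minimal (the entries of $\widehat d$ are in $\max_R$ since $b$ is the minimal number of generators, and $\widehat h = \widehat F \widehat d^{-1}$ over the fraction field has entries in $\max_R$ by the same determinant argument). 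The periodic resolution over $\mathcal O = R/(\widehat F)$ is then Eisenbud's standard output \cite[Chapter 1]{EP16}.

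For the specialization statement I would tensor the $R$-free resolution by $R/(z_1 - c_1, \dots, z_{2n} - c_{2n}) = \mathbb C\{x,y\}$. The key is that $\mathrm{Tor}_1^R(M, \mathbb C\{x,y\}) = 0$: this follows from flatness of $\gamma$, since $M = \mathcal O_{(X,0)\times W,(0,L)}$ is $\mathcal O_{(W,L)}$-flat (it is even free, being the pushforward under a parametrization in family, cf.\ the parametrization in family recalled before \defref{gen-proj}), and the elements $z_i - c_i$ generate $\max_{(W,L)}$, so $M \otimes_{\mathcal O_{(W,L)}}^{\mathbf L} \mathbb C \simeq M/\max_W M = \mathcal O_{(X,0)}$ concentrated in degree $0$. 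Hence
\[
0 \longrightarrow \mathbb C\{x,y\}^{b} \stackrel{d}{\longrightarrow} \mathbb C\{x,y\}^{b} \longrightarrow \mathcal O_{(X,0)} \longrightarrow 0
\]
stays exact with $d$ the reduction of $\widehat d$, and reducing the matrix identity gives $d h = h d = F(x,y;0,\dots,0)\,\mathrm{Id}$, i.e.\ $(d,h)$ is a matrix factorization of the equation of $(\pi_L(X,0),0)$; minimality of the periodic resolution over $\mathcal O_{(\pi_L(X,0),0)}$ again follows since the entries of $d, h$ stay in $\max$. I expect the main obstacle to be the bookkeeping for the finiteness and maximal Cohen--Macaulayness of $M$ over $\mathcal O$ — i.e.\ pinning down why $\operatorname{depth}_R M = \dim R - 1$ uniformly in the family — which is where one genuinely uses that $H \cap C_5(X,0) = \{0\}$ (the transversality to Whitney's cone) so that the projection is finite; everything after that is formal homological algebra and the verification that minimality and the rank $b$ propagate through the flat specialization.
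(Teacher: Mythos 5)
Your overall route is the same as the paper's: realize $\mathcal O_{(X,0)\times W,(0,L)}$ as a maximal Cohen--Macaulay module over the hypersurface ring $\mathcal O\cong R/(\widehat F)$ with $\pd_R=1$, extract the matrix factorization from the length-one $R$-presentation, and specialize along the regular sequence $z_1-c_1,\dots,z_{2n}-c_{2n}$ using flatness over $(W,L)$. Your derivation of $\pd_R M=1$ via Cohen--Macaulayness of $(X,0)\times W$ and Auslander--Buchsbaum, and your $\mathrm{Tor}_1$ argument for exactness of the specialized sequence, are fine and if anything slightly more explicit than the paper, which instead lifts $\pd_{\mathbb C\{x,y\}}(\mathcal O_{(X,0)})=1$ through the regular sequence and tensors without comment.

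There is, however, one genuine gap: your claim that the entries of $\widehat h$ lie in $\max_R$ ``by the same determinant argument.'' Minimality of the presentation $0\to R^b\xrightarrow{\widehat d}R^b\to M\to 0$ controls only the entries of $\widehat d$; it is perfectly compatible with $M$ having a free $\mathcal O$-summand. Indeed, if $M\cong\mathcal O\oplus M'$ then the minimal $R$-presentation is block diagonal with a $1\times 1$ block equal to $\widehat F$ (whose entries are still in $\max_R$), and the corresponding block of $\widehat h=\widehat F\,\widehat d^{-1}$ is the unit $1$. In that case the $2$-periodic complex over $\mathcal O$ is a resolution but is \emph{not} minimal, and neither is its specialization, so the statement you are proving fails at the step where you declare the periodic resolution to be Eisenbud's ``standard output.'' What is missing is the verification that $\mathcal O_{(X,0)\times W,(0,L)}$ has no free $\mathcal O$-summand (equivalently, that the matrix factorization is reduced); this is exactly the hypothesis of \cite[Theorem 6.1(ii)]{Eis80} that the paper checks separately, by observing that a summand $\mathcal O^r$ with $r\ge 1$ would specialize to a free $\mathcal O_{(\pi_L(X,0),0)}$-summand of $\mathcal O_{(X,0)}$, which is excluded since $\mathcal O_{(Y,0)}\subset\mathcal O_{(X,0)}$ is a birational (rank one, non-free unless $(X,0)$ is planar) extension. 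You should insert such an argument before invoking the periodic resolution; the rest of your proof then goes through.
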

\begin{proof}
Let $b$ the minimal number of generators of $\mathcal O_{(X,0)}$ as
$\mathcal O_{(\pi_L(X,0),0)}$-module.
Then the minimal number of generators of $\mathcal O_{X\times W,(0,L)}$
as $\mathcal O$-module is $b$ as well.
Since $\mathcal O$ is a quotient of $R$, the minimal number of generators of $\mathcal O_{(X,0)\times W,(0,L)}$ as $R$-module is  $b$.

Since $\mathcal O_{(X,0)}$ is a finite and Cohen-Macaulay $\mathcal O_{(\pi_L(X,0),0)}$-module of dimension one,
$\mathcal O_{(X,0)}$ is a projective dimension one $\mathbb C\{x,y\}$-module.
On the other hand, since $\{z_1-c_1,\dots,z_n-c_n\}$ is a regular sequence of
$R$
$$
\pd_R(\mathcal O_{(X,0)\times W,(0,L)})=\pd_{\mathbb C\{x,y\}}(\mathcal O_{(X,0)})=1.
$$

Assume that there exists a decomposition of
$\mathcal O$-modules with $r\ge 1$
$$
\mathcal O_{(X,0)\times W,(0,L)}\cong \mathcal O^r \oplus M,
$$
tensoring by
$ \cdot \otimes_{R} \frac{R}{(z_1-c_1,\dots,z_{2n}-c_{2n})}$
we get the following decomposition of $\mathcal O_{(\pi_L(X,0),0)}$-modules
$$
\mathcal O_{(X,0)}\cong \mathcal O_{(\pi_L(X,0),0)}^r\oplus \overline{M}.
$$
Since $\mathcal O_{(X,0)}$ is a finite length
$\mathcal O_{(\pi_L(X,0),0)}$-module we get a contradiction.
Then
$\mathcal O_{(X,0)\times W,(0,L)}$
is an $\mathcal O$-module with no free summands.

Recall that from the proof of \propref{generic-proj} there exists
$\widehat{F}=F(x,y;z_1-c_1,\dots,z_{2n}-c_{2n})\in R=\mathbb C\{x,y;z_1-c_1,\dots,z_{2n}-c_{2n}\}$
such that
 $\mathcal O\cong \frac{R}{(\widehat{F})}$.
Being  $\mathcal O_{(X,0)\times W,(0,L)}$  a maximal Cohen-Macaulay $\mathcal O_{\pi((X,0)\times W,(0,L)),(0,L)}$-module with no free summand, from
\cite[Theorem 6.1 (ii)]{Eis80} there exist a matrix factorization of $\widehat{F}$
$$
\widehat{d}\; \; \widehat{h}=  \widehat{h}\; \widehat{d}=\widehat{F} \cdot \mathrm{Id}_{R^b}
$$
with
$\widehat{d}, \widehat{h}\in \mathrm{Mat}_{b\times b}(R)$
defining  a minimal free resolution of $\mathcal O_{(X,0)\times W,(0,L)}$ as
$\mathcal O$-module
$$
 \cdots \longrightarrow
\mathcal O^b \stackrel{\widehat{h}}{\longrightarrow}
\mathcal O^b \stackrel{\widehat{d}}{\longrightarrow}
\mathcal O^b \stackrel{\widehat{h}}{\longrightarrow}
\mathcal O^b \stackrel{\widehat{d}}{\longrightarrow}
\mathcal O^b \longrightarrow
\mathcal O_{(X,0)\times W,(0,L)}
\longrightarrow 0.
$$
From \cite[Theorem 2.1.1 (1)]{EP16} we get a minimal resolution of $R$-modules
$$
0 \longrightarrow
R^b\stackrel{\widehat{d}}{\longrightarrow}
R^b\longrightarrow
\mathcal O_{(X,0)\times W,(0,L)}\longrightarrow 0.
$$
Tensoring this resolution by $ \cdot\otimes_{R}
\frac{R}{(z_1-c_1,\dots,z_{2n}-c_{2n})}$ we get a resolution
$$
0 \longrightarrow
\mathbb C\{x,y\}^b\stackrel{d}{\longrightarrow}
\mathbb C\{x,y\}^b\longrightarrow
\mathcal O_{(X,0)}
\longrightarrow 0
$$
with $d$ the induced morphism.
Notice that the matrix factorization of $\hat{F}$ induces  a matrix factorization of $F(x,y;0,\dots,0)$
$$
d\;  h= h\; d=F \cdot \mathrm{Id}_{\; \mathbb C\{x,y\}^b}
$$
with
$d, h\in \mathrm{Mat}_{b\times b}(\mathbb C\{x,y\})$ the induced morphisms by $\widehat{d}, \widehat{h}$.
From\cite[Theorem 1.2.1 (2)]{EP16} we get a minimal free resolution of
$\mathcal O_{(\pi_L(X,0),0)}$-modules
\begin{multline*}
 \cdots \longrightarrow
\mathcal O_{(\pi_L(X,0),0)}^b \stackrel{h}{\longrightarrow}
\mathcal O_{(\pi_L(X,0),0)}^b \stackrel{d}{\longrightarrow}
\mathcal O_{(\pi_L(X,0),0)}^b \stackrel{h}{\longrightarrow}\\
 \stackrel{h}{\longrightarrow}\mathcal O_{(\pi_L(X,0),0)}^b \stackrel{d}{\longrightarrow}
\mathcal O_{(\pi_L(X,0),0)}^b \longrightarrow
\mathcal O_{(X,0)}
\longrightarrow 0
\end{multline*}
\end{proof}

\medskip
In the next example we explicitly compute the resolutions and matrix factorizations appearing in the last result for the monomial curve singularity of \exref{exC5def}.

\medskip
\begin{example}
\label{exC5MF-def}
Let $(X,0)$ be the curve singularity of \exref{exC5def}.
Let us consider the ring
$$
\mathcal O_{((X,0)\times W,(0,L))}\cong\frac{R}{R\; I_{(X,0)}}
$$
where $R=\mathbb C\{x_1,x_2,x_3,s\}$.
After the $\mathbb C$-algebra isomorphism  of $R$
$$
\phi:
\left\{
  \begin{array}{ll}
   \phi(x_1)&=x \\
    \phi(x_2)&=y-(1+s)z \\
\phi(x_3)&=z\\
\phi(s)&=s
  \end{array}
\right.
$$
we get that $\mathcal O_{((X,0)\times W,(0,L))}\cong M_X:=\mathbb C\{x,y,z,s\}/J$
where $J$ is the ideal generated by
$x^3-(y-(1+s)z)^2,z^2-x^2(y-(1+s)z)$.
We have that
$
\mathcal O_{(\pi((X,0)\times W),(0,L)),(0,L)}\cong \frac{R}{(\widehat{F})}
$
, $L=(1,0,0,0,1,1)$, with
$$
{\widehat F}=y^4-2x^3y^2+x^6-4x^5y-x^7-s^4 x^7-4 s^3 x^7-6 s^2 x^7-4 s^2 x^5 y-4 s x^7-8 s x^5 y
$$
The $\mathbb C\{x,y,s\}$-module $M_X$ is generated by $1, \overline{z}$ and we have a matrix factorization of ${\widehat F}$
with associated matrices
{\scriptsize{
$$
{\widehat d}=\left(
\begin{array}{cc}
 x^3-y^2-(s+1)^2 x^2 y  & x^4 y+2 x^2 y^2+s^3 x^4 y+3 s^2 x^4 y+3 s x^4 y+2 s x^2 y^2 \\
 s (s+1)^2 x^2+(s+1)^2 x^2+2 (s+1) y & x^3-x^4-3 x^2 y-y^2-s^4 x^4-4 s^3 x^4-6 s^2 x^4-3 s^2 x^2 y-4 s x^4-6 s x^2 y \\
\end{array}
\right)
$$
}}
and $\widehat{h}=\adj(\widehat{d})$.
Notice that making $s=0$ in $\widehat{d}, \widehat{h}$ and $\widehat{F}$ we get $d, h$ and $F$ of \exref{exC5MF}.
\end{example}

\medskip
In the following example, inspired in \cite[Example 1]{EU97}, we show that there exists
matrix factorizations $(d,h)$ of a convergent series $F\in \mathbb C\{x,y\}$ such that $\coker(d)$ is not a $\mathbb C$-algebra.
We will face this problem in the next section.

\medskip
\begin{example}
\label{no-alg}
Let $(Y,0)$ be the monomial plane curve singularity  with parametrization $x=t^3, y=t^4$, so
$\mathcal O_{(Y,0)}= \mathbb C\{x,y\}/(x^4-y^3)\cong\mathbb C\{t^3,t^4\}$.
We consider the $\mathcal O_{(Y,0)}$-module
$B=\mathcal O_{(Y,0)} + t \mathcal O_{(Y,0)}\subset \mathbb C\{t\}$.
Notice that $B$ is generated as $\mathbb C$-vector space by
$1,t, t^3,t^4,t^5,\dots$ and that $B$ is not a ring.
We have that the pair $(d,h)$ is a matrix factorization of $F=x^4-y^3$ where
$$
d = \left(
  \begin{array}{cc}
    y &-x^3 \\
    -x & y^2 \\
  \end{array}
\right)
$$
and $h=\adj(d)$, inducing a minimal resolution of $B$ as $\mathbb C\{x,y\}$-module:
$$
0\longrightarrow
\mathbb C\{x,y\}^2\stackrel{d}{\longrightarrow}
\mathbb C\{x,y\}^2\stackrel{(1,t)}{\longrightarrow}
B
\longrightarrow 0.
$$
\end{example}

\medskip
\section{Matrix decompositions associated to a plane curve}

In this section we study the family of curve singularities $(X,0)\subset \ori{n}$, and their associated matrix factorizations, sharing a given generic plane projection $(Y,0)\subset \ori{2}$.

Following the notations of the previous section, we know that there are matrix factorizations such that $\coker(d)$ is not a $\mathbb C\{x,y\}$-algebra and, in particular, $\coker(d)$ is not the ring of regular functions of a  curve singularity, \exref{no-alg}.
The first step of this section is to characterize the matrix factorizations coming from a  curve singularity.

First, we give an upper bound of the first Betti number of   matrix factorizations defined by a curve singularity.
This result gives a partial answer to the question appearing in page $4$ of the first paragraph of \cite{EP16}.

\medskip
\begin{lemma}
\label{bound-b}
Let $(X,0)$ be a curve singularity of $\ori{n}$ and let $(Y,0)$ be a generic plane projection of $(X,0)$.
Then
$$
\beta_{\mathcal O_{(Y,0)}}(\mathcal O_{(X,0)})\le e_0(X,0)-1.
$$
\end{lemma}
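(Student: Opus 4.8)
The plan is to compare the local ring $\mathcal O_{(X,0)}$, viewed as a module over $\mathcal O_{(Y,0)} \cong \mathbb C\{x,y\}/(F)$, with its associated graded ring, and to use Nakayama over the Cohen–Macaulay base together with the standard-form parametrization of $(X,0)$ to count generators. Write $A = \mathcal O_{(Y,0)}$ and $M = \mathcal O_{(X,0)}$. Since $\pi_L$ is a finite morphism, $M$ is a finite $A$-module; since $M$ is one-dimensional Cohen–Macaulay and $A$ is a complete intersection of dimension one, $M$ is a maximal Cohen–Macaulay $A$-module. Pick a superficial element, namely (the image of) the linear form $x = L_1$ defining the generic direction of the tangent line of $(X,0)$ — in the standard parametrization this pulls back to $t^{e}$ with $e = e_0(X,0)$, so $x$ is a nonzerodivisor on $M$ with $\length_\mathbb C(M/xM) = e_0(X,0)$. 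By the graded Nakayama lemma over the artinian ring $A/xA$, $\beta_A(M) = \beta_{A/xA}(M/xM) \le \length_\mathbb C(M/xM) = e_0(X,0)$, which already gives the weaker bound $\beta_A(M) \le e_0(X,0)$.

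To sharpen this to $e_0(X,0)-1$, I would observe that $M/xM$ is never free of rank $e_0(X,0)$ over the field — equivalently, $M/xM$ is not a vector space on which $A/xA$ acts only through its residue field. More concretely: $1 \in M$ is a generator, and the class of $1$ in $M/\mathfrak m_A M$ is one of the minimal generators; the point is that $x \cdot M$, while equal to $(t^e)\mathbb C\{t\} \cap M$, contains not just $x$ times the generators but picks up extra elements of $M$ coming from the parametrization. Here is the key computation I would carry out: using the standard form $x_1 = t^{e}$, $x_i = \sum_{j \ge e+1} b_{j,i} t^j$, the element $x_1 = x$ already lies in $xA \subseteq xM$, and one shows that $M/xM$ is spanned over $\mathbb C$ by the images of $1, t^{a_2}, \dots$ where the exponents are the $\le e-1$ "gaps below $e$ in the value semigroup modulo $e$" — in particular the constant function $1$ spans a direction but the span of $\{1\}$ together with the $e-1$ independent nonconstant directions coming from $x_2,\dots,x_n$ and their products accounts for at most $e$ dimensions, while the relation $x \equiv 0$ removes one. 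The cleanest formulation: $\beta_A(M) \le \dim_\mathbb C(M/(xM + \mathbb C\cdot 1)) + 1$, and $M/(xM + \mathbb C \cdot 1)$ has dimension at most $e_0(X,0) - 1$ because $M/xM$ has dimension $e_0(X,0)$ and $1 \notin xM$ (as $x$ is a nonunit and $1$ is a unit, $1 \notin xM$ since $xM \subseteq \mathfrak m_A M \subsetneq M$... more carefully, $1 \notin \mathfrak m_A M$ forces $1 \notin xM$).

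Alternatively, and perhaps more robustly, I would go through Whitney's cone and the geometry of the generic projection directly: by \propref{generic-proj} the generic $\pi_L$ restricts to an isomorphism of $(X,0)$ onto its image away from finitely many points, and the number of branches is preserved (here everything is irreducible), so the normalizations agree, $\overline{\mathcal O_{(X,0)}} = \overline{\mathcal O_{(Y,0)}} = \mathbb C\{t\}$. Then $\beta_A(M)$ equals the number of generators of $M$ as a module over the complete intersection $A$, and using that $A \subseteq M \subseteq \mathbb C\{t\}$ with $A = \mathbb C\{t^e, \dots\}$ having multiplicity $e = e_0(X,0)$: a minimal generating set of $M$ over $A$ has size $\le e$, with equality only when $M$ is "as far from $A$ as possible", which cannot happen because $1$ is a common generator forcing a coincidence in low degrees. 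I expect the main obstacle to be making the "$-1$" rigorous rather than just "$\le e_0$": one must genuinely exploit that the generator $1$ of $M$ maps to a generator of $A$ and so does not contribute to the part of $M/xM$ that is "new", and I would likely phrase this via $\mathrm{gr}_{\mathfrak m}$ and the fact that $e_0(X,0) = e_0$ is realized by $x$ as a superficial element, giving $\beta_A(M) = \beta_{\mathrm{gr}_A}(\mathrm{gr} M) \le e_0 - (\text{number of generators of }\mathrm{gr} M\text{ in degree }0) = e_0 - 1$ since $1$ is the unique minimal generator in degree $0$. That last step — identifying that exactly one minimal generator sits in degree $0$ and that $\mathrm{gr} M$ needs no generator in degree $\ge e_0$ — is where the real content lies, and I would spell it out using the standard-form parametrization of \secref{sec} (the section on generic projections) to control the initial forms.
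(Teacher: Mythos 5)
Your opening step is the same as the paper's: after arranging that some $a\in\{x,y\}$ pulls back to a series of order $e_0(X,0)$ in $t$, the element $a$ is a nonzerodivisor on $M=\mathcal O_{(X,0)}$ with $\dim_{\mathbb C}(M/aM)=e_0(X,0)$, and Nakayama gives $\beta_{\mathcal O_{(Y,0)}}(M)=\dim_{\mathbb C}M/(x,y)M\le e_0(X,0)$. (A minor but real point: you cannot simply declare that $x=L_1$ pulls back to $t^{e}$; a priori its coefficient of $t^{e}$ could vanish. The paper shows it cannot vanish for both $x$ and $y$ precisely because $L\in W_{(X,0)}$, i.e.\ the centre of projection misses the tangent line inside $C_5(X,0)$ --- this is the only place genericity enters.) The genuine gap is the passage from $e_0$ to $e_0-1$, which is the whole content of the lemma. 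Your ``cleanest formulation'' $\beta_A(M)\le \dim_{\mathbb C}\bigl(M/(xM+\mathbb C\cdot 1)\bigr)+1$, combined with $\dim_{\mathbb C}\bigl(M/(xM+\mathbb C\cdot 1)\bigr)=e_0-1$, returns exactly $\beta_A(M)\le e_0$ again. The element $1$ is the wrong object to focus on: its class is nonzero in both $M/aM$ and $M/(x,y)M$, so it contributes one dimension to each quotient and can never account for a drop. The graded inequality you invoke at the end (bounding $\beta$ by $e_0$ minus the number of degree-zero generators of $\gr M$) is asserted rather than proved and is not a standard fact; moreover the filtration relevant to $\beta_A$ is the $(x,y)\mathcal O_{(X,0)}$-adic one, not the $\max_{(X,0)}$-adic one, so even identifying $\gr M$ there needs care.

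What actually produces the $-1$, and what the paper does, involves the \emph{second} linear form rather than $1$. Let $a$ be the transversal form and $b$ the other one. Then $\beta_A(M)=\dim_{\mathbb C}M/(a,b)M$, and $(a,b)\mathcal O_{(X,0)}\supsetneq a\mathcal O_{(X,0)}$ because $b\notin a\mathcal O_{(X,0)}$: if $b=au$ then either $b-u(0)a\in\max_{(X,0)}^2$ (when $u$ is a unit) or $b\in\max_{(X,0)}^2$ (when it is not), contradicting that $x$ and $y$ are two independent linear forms and hence part of a minimal system of generators of $\max_{(X,0)}$. Hence $\dim_{\mathbb C}M/(a,b)M\le\dim_{\mathbb C}(M/aM)-1=e_0(X,0)-1$. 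Your proposal never establishes $b\notin a\mathcal O_{(X,0)}$, and without that single fact the bound $e_0(X,0)-1$ does not follow from anything you wrote.
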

\begin{proof}
We may assume that  $(X,0)$ has, after a good election of the uniformation parameter,
a standard parametrization
$$
(X,0): \left\{
 \begin{array}{ll}
x_1=t^e&\\
x_2=f_2(t)&\\
\vdots & \\
x_n=f_n(t)
 \end{array}
\right.
$$
with $e< v_t(f_2(t))\le \dots \le v_t(f_n(t))$ and $e=e_0(X,0)$.
The generic plane projection of $(X,0)$ from $L=(c_1,\dots, c_{2n})\in W_{(X,0)}$ has a parametrization
$$
\left\{
  \begin{array}{ll}
x=c_1 t^e+ \sum_{j=2}^{n} c_j f_j(t)& \\ \\
y=c_{n+1} t^e+ \sum_{j=n+2}^{2n} c_j f_{j-n}(t)\\
 \end{array}
\right.
$$
see proof of \propref{generic-proj}.

Assume that $c_1=c_{n+1}=0$, then the $(n-2)$
-plane $H$ defined by $L$
has equations
$c_2 x_2+\cdots +c_n x_n=0$,
$c_{n+2} x_2+\cdots +c_{2n}x_n=0$.
Notice that the tangent line of $(X,0)$ is contained in $H$ contradicting that $L\in W_{(X,0)}$,
\propref{cone5}.
Hence $c_1\neq 0$ or $c_{n+1}\neq 0$, so $v_t(x)=e$ or $v_t(y)=e$.
This implies that $x$, or $y$, is a superficial element of degree one of $\mathcal O_{(X,0)}$ and then
$\mathcal O_{(X,0)}/a \mathcal O_{(X,0)}=e$, for $a=x$ or $a=y$.
Since $x ,y$ belong to a minimal system of generators of $\max_{(X,0)}$ and form a system of generators of $\max_{(Y,0)}$, we have
$$
\beta_{\mathcal O_{(Y,0)}}(\mathcal O_{(X,0)})=
\dim_{\mathbb C}\left(\frac{\mathcal O_{(X,0)}}{(x,y)\mathcal O_{(X,0)}}\right)<
\dim_{\mathbb C}\left(\frac{\mathcal O_{(X,0)}}{a\mathcal O_{(X,0)}}\right)=
e_0(X,0)
$$
for
$a=x$ or $a=y$.
\end{proof}

\medskip
Next we characterize matrix factorizations $(d,h)$ of a power  series $F\in\mathbb C\{x,y\}$ for which  $\coker{(d)}$ is a $\mathbb C\{x,y\}$-algebra, see \exref{no-alg}.
We follow the ideas appearing in the introduction of \cite{EU97}.

\medskip
\begin{theorem}
\label{char-alg}
Let $(Y,0)\subset (\mathbb C^2,0)$ be an irreducible plane curve singularity defined by an equation $F\in\mathbb C\{x,y\}$.
Let $(d,h)$ be a matrix factorization of $F$ inducing a free resolution of $\coker{(d)}$ as
$\mathbb C\{x,y\}$-modules
$$
0\longrightarrow
\mathbb C\{x,y\}^b\stackrel{d}{\longrightarrow}
\mathbb C\{x,y\}^b\longrightarrow
\coker(d)
\longrightarrow 0.
$$
Then, $\coker{(d)}$ is a finitely generated faithful $\mathcal O_{(Y,0)}$-module and
the following conditions are equivalent:
\begin{enumerate}
\item[(i)] there is a finitely $\mathcal O_{(Y,0)}$-algebra $B$, isomorphic to   $\coker(d)$ as
$\mathcal O_{(Y,0)}$-module, and $\mathcal O_{(Y,0)}$-algebra extensions
$$
\mathcal O_{(Y,0)}\subset  B \subset \overline{\mathcal O_{(Y,0)}}\cong \mathbb C\{t\}
$$

\item[(ii)] there exists $\alpha\in \mathcal O_{(Y,0)}\setminus \{0\}$ and
$e\in \coker{(d)}\setminus \{0\}$ such that
$$
\alpha \coker{(d)}\subset e \mathcal O_{(Y,0)}\subset \coker{(d)}
$$
and
the natural morphism
$\coker{(d)}\rightarrow \ext^1_{\mathcal O_{(Y,0)}}(\coker{(d)}/e \mathcal O_{(Y,0)},\coker{(d)})$ induced by the exact sequence
$$
 0\longrightarrow
e \mathcal O_{(Y,0)}\longrightarrow
\coker{(d)}         \longrightarrow
\coker{(d)}/e \mathcal O_{(Y,0)}\longrightarrow
0
$$
is zero.
\end{enumerate}
If these equivalent conditions hold then  $B=e^{-1}\coker(d)$ is a finitely generated $\mathcal O_{(Y,0)}$-algebra local domain of dimension one.
Moreover, there is a curve singularity $(X,0)\subset (\mathbb C^{b+2},0)$ such that
$\mathcal O_{(X,0)}= B$.
\end{theorem}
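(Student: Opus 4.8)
The plan is to work throughout with $C:=\coker(d)$ regarded as a module over $\mathcal{O}_{(Y,0)}=\mathbb{C}\{x,y\}/(F)$, and I write $K=\mathrm{Frac}(\mathcal{O}_{(Y,0)})$. First, the preliminary assertions. From $dh=F\cdot\mathrm{Id}$ we get $F\,\mathbb{C}\{x,y\}^{b}\subseteq d(\mathbb{C}\{x,y\}^{b})$, so $C$ is a finitely generated $\mathcal{O}_{(Y,0)}$-module, generated by the images of the standard basis; I assume $C\ne0$, which holds in the case of interest (e.g.\ whenever $d$ has entries in the maximal ideal). Faithfulness is a determinant argument: $\det(d)$ is then a non-unit, so from $\det(d)\det(h)=F^{b}$ and the fact that $F$ is a prime of $\mathbb{C}\{x,y\}$ — this is where irreducibility of $(Y,0)$ enters — one gets $F\mid\det(d)$; then if $\tilde g\in\mathbb{C}\{x,y\}$ satisfies $\tilde g\cdot\mathrm{Id}=dM$, the identity $\tilde g^{\,b}=\det(d)\det(M)$ forces $F\mid\tilde g$. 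Finally, the given resolution has length one, so $\pd_{\mathbb{C}\{x,y\}}C\le1$ and hence $\depth_{\mathcal{O}_{(Y,0)}}C=\depth_{\mathbb{C}\{x,y\}}C\ge1$ by Auslander--Buchsbaum; since $\dim C=1$, $C$ is maximal Cohen--Macaulay over the one-dimensional local domain $\mathcal{O}_{(Y,0)}$, hence torsion-free. In particular $\operatorname{Ann}_{\mathcal{O}_{(Y,0)}}(e)=0$ for any $e\in C\setminus\{0\}$, so $e\mathcal{O}_{(Y,0)}\cong\mathcal{O}_{(Y,0)}$ and $\Hom_{\mathcal{O}_{(Y,0)}}(e\mathcal{O}_{(Y,0)},C)\cong C$ via $\phi\mapsto\phi(e)$.

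For (i)$\Rightarrow$(ii): fix an isomorphism of $\mathcal{O}_{(Y,0)}$-modules $\theta\colon C\to B$ and set $e=\theta^{-1}(1)$, so that $e\mathcal{O}_{(Y,0)}$ corresponds to $\mathcal{O}_{(Y,0)}\subseteq B$. Because $\overline{\mathcal{O}_{(Y,0)}}\cong\mathbb{C}\{t\}$ is module-finite over $\mathcal{O}_{(Y,0)}$ its conductor is a nonzero ideal, and any $\alpha\ne0$ in it satisfies $\alpha B\subseteq\mathcal{O}_{(Y,0)}$, i.e.\ $\alpha C\subseteq e\mathcal{O}_{(Y,0)}$. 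Applying $\Hom_{\mathcal{O}_{(Y,0)}}(-,C)$ to $0\to e\mathcal{O}_{(Y,0)}\to C\to C/e\mathcal{O}_{(Y,0)}\to0$, the stated morphism is the connecting map $\delta\colon \Hom_{\mathcal{O}_{(Y,0)}}(e\mathcal{O}_{(Y,0)},C)\cong C\to\ext^{1}_{\mathcal{O}_{(Y,0)}}(C/e\mathcal{O}_{(Y,0)},C)$, which vanishes exactly when every $c\in C$ equals $\psi(e)$ for some $\psi\in\operatorname{End}_{\mathcal{O}_{(Y,0)}}(C)$. Transporting through $\theta$, one takes $\psi$ to be multiplication by $\theta(c)$ on the algebra $B$; hence $\delta=0$.

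For (ii)$\Rightarrow$(i), which I expect to be the crux: the containment $\alpha C\subseteq e\mathcal{O}_{(Y,0)}$ together with torsion-freeness makes the endomorphism $\psi_{c}$ with $\psi_{c}(e)=c$ unique — if $\phi\in\operatorname{End}_{\mathcal{O}_{(Y,0)}}(C)$ kills $e$, then for all $x$ we have $\alpha\phi(x)=\phi(\alpha x)\in\phi(e\mathcal{O}_{(Y,0)})=0$, so $\phi=0$. Uniqueness makes $c\mapsto\psi_{c}$ an $\mathcal{O}_{(Y,0)}$-linear map with $\psi_{e}=\mathrm{Id}_{C}$, so $c*c':=\psi_{c}(c')$ is an $\mathcal{O}_{(Y,0)}$-bilinear product on $C$ with two-sided unit $e$. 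Writing $\alpha c=\lambda_{c}\,e$ with $\lambda_{c}\in\mathcal{O}_{(Y,0)}$, the assignment $\iota(c)=\lambda_{c}/\alpha$ is well defined, $\mathcal{O}_{(Y,0)}$-linear, injective (as $C$ is torsion-free) and sends $e$ to $1$; moreover it is multiplicative for $*$, since $\alpha\psi_{c}(c')=\lambda_{c'}c$ gives $\alpha\lambda_{c*c'}=\lambda_{c}\lambda_{c'}$, hence $\iota(c*c')=\iota(c)\iota(c')$. Thus $(C,*,e)$ is, via $\iota$, a unital subring $B:=\iota(C)$ of the field $K$; in particular $*$ is associative and commutative and $B$ is a domain. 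Being module-finite over $\mathcal{O}_{(Y,0)}$, $B$ is a one-dimensional local ring, integral over $\mathcal{O}_{(Y,0)}$ and contained in $K$, so $\mathcal{O}_{(Y,0)}\subseteq B\subseteq\overline{\mathcal{O}_{(Y,0)}}\cong\mathbb{C}\{t\}$, and $B\cong C$ as $\mathcal{O}_{(Y,0)}$-modules — concretely $B=e^{-1}C$ once $C$ is identified with a fractional ideal of $\mathcal{O}_{(Y,0)}$ inside $K$. This proves (i) and the first part of the final assertion.

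For the last claim, $B\cong C$ is generated over $\mathcal{O}_{(Y,0)}$ by $b$ elements, one of which may be taken to be $1$, while $\mathcal{O}_{(Y,0)}=\mathbb{C}\{x,y\}/(F)$ is generated as an analytic algebra by $x$ and $y$; therefore $B$ is a quotient of $\mathbb{C}\{u_{1},\dots,u_{b+1}\}$, so its embedding dimension is at most $b+1\le b+2$, and since $B$ is a reduced (indeed a domain) one-dimensional analytic local ring it equals $\mathcal{O}_{(X,0)}$ for an irreducible curve singularity $(X,0)\subset(\mathbb{C}^{b+2},0)$. The genuinely delicate step is (ii)$\Rightarrow$(i): one must manufacture the multiplication endomorphisms $\psi_{c}$ from the vanishing of the $\ext^{1}$-obstruction and check that the chain $\alpha\,\coker(d)\subseteq e\,\mathcal{O}_{(Y,0)}\subseteq\coker(d)$ of condition (ii) is exactly what is needed to make $*$ well defined, associative and commutative, and to place the resulting ring inside the normalization $\mathbb{C}\{t\}$.
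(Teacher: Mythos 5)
Your proposal is correct and follows the same overall architecture as the paper's proof: for (i)$\Rightarrow$(ii) you take $e$ to be the element corresponding to $1\in B$ and $\alpha$ in the conductor of $\mathcal O_{(Y,0)}\subset\mathbb C\{t\}$, and read off the vanishing of the connecting map from the extendability of $f$ to $\tilde f(q)=q f(1)$; for (ii)$\Rightarrow$(i) you use the vanishing of the $\ext^1$-obstruction to produce, for each $c\in\coker(d)$, an endomorphism $\psi_c$ with $\psi_c(e)=c$, and define the product by $c*c'=\psi_c(c')$ — exactly the paper's Lemma~\ref{lem-est}. Two sub-steps are handled by genuinely different arguments, both valid. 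First, for faithfulness the paper runs a diagram chase along the periodic resolution ($ag=0$ forces $h(g')=0$, hence $g\in\operatorname{im}(d)$ maps to zero), whereas you use $\det(d)\det(h)=F^b$ together with primality of $F$ to show the annihilator lies in $(F)$; your version needs the (standard, and implicitly assumed by the paper too) normalization that $d$ has entries in the maximal ideal so that $\coker(d)\neq 0$. Second, and more substantially, where the paper verifies commutativity of $*$ by the $\alpha^2$-computation and then ``leaves to the reader'' the remaining algebra axioms before finally setting $B=e^{-1}\coker(d)\subset K(\mathcal O_{(Y,0)})$, you reverse the order: you first build the injective multiplicative map $\iota(c)=\lambda_c/\alpha$ into the fraction field (using the uniqueness of $\psi_c$, which you correctly derive from $\alpha\coker(d)\subseteq e\,\mathcal O_{(Y,0)}$ and torsion-freeness) and then obtain associativity, commutativity and well-definedness of $*$ for free from the field structure. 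This is a cleaner way to discharge the step the paper omits, and it makes the identification $B=e^{-1}\coker(d)$ transparent. Your final count even gives embedding dimension at most $b+1$, consistent with (and slightly sharper than) the paper's $(X,0)\subset(\mathbb C^{b+2},0)$.
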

\begin{proof}
We know that the matrix factorization induces a free periodic resolution of $\mathcal O_{(Y,0)}$-modules
$$
 \cdots \longrightarrow
\mathcal O_{(Y,0)}^b \stackrel{h}{\longrightarrow}
\mathcal O_{(Y,0)}^b \stackrel{d}{\longrightarrow}
\mathcal O_{(Y,0)}^b \stackrel{\pi}{\longrightarrow}
\coker{(d)}
\longrightarrow 0,
$$
\cite[Theorem 2.1.1]{EP16}.
Then $\coker{(d)}$ is a finitely generated $\mathcal O_{(Y,0)}$-module.

Next, we prove that $\coker(d)$ is a faithful $\mathcal O_{(Y,0)}$-module.
Let $a\in \mathcal O_{(Y,0)}$ be a non-zero element.
Assume that there is $g\in \coker{(d)}\setminus \{0\}$ such that $ag=0$.
Since we have a commutative diagram
\begin{center}
\begin{tikzcd}
\cdots  {\mathcal O}_{(Y,0)}^b\arrow[ r, "h"] \arrow[d,"a"]&    {\mathcal O}_{(Y,0)}^b\arrow[ r, "d"] \arrow[d,"a"] & {\mathcal O}_{(Y,0)}^b\arrow[ r, "\pi"] \arrow[d,"a"] &\coker{(d)}\arrow[d,"a"]\arrow[r]&0\\
\cdots  {\mathcal O}_{(Y,0)}^b\arrow[ r, "h"] &  {\mathcal O}_{(Y,0)}^b\arrow[ r, "d"] & {\mathcal O}_{(Y,0)}^b\arrow[ r, "\pi"]&\coker{(d)}\arrow[r]&0
\end{tikzcd}
\end{center}
we get that there is $g'\in {\mathcal O}_{(Y,0)}^b$ such that $\pi(g')=g$.
Hence $\pi(a g')=a \pi(g')=a g=0$ and there is $g''\in {\mathcal O}_{(Y,0)}^b$
with $d(g'')=a g'$.
Moreover
$$
0=hd(g'')=h(ag')=a h(g'),
$$
since ${\mathcal O}_{(Y,0)}^b$ is a domain and $a\neq 0$ we get $h(g')=0$.
Then there is $w\in {\mathcal O}_{(Y,0)}^b$ with $g'=d(w)$, so
$$
g=\pi(g')=\pi(d(w))=0.
$$
Hence  $\coker{(d)}$ is a faithful $\mathcal O_{(Y,0)}$-module.

\medskip
\noindent
Let us assume $(i)$.
Then, identifying  $\coker(d)$ with $B$, we have  $\mathcal O_{(Y,0)}$-algebra extensions
$$
\mathcal O_{(Y,0)}\subset  \coker(d) \subset \overline{\mathcal O_{(Y,0)}}\cong \mathbb C\{t\}.
$$
We take $e=1$ and $\alpha$ a generator of the conductor of the extension
 $\mathcal O_{(Y,0)} \subset \mathbb C\{t\}$.

Applying the functor $\Hom_{\mathcal O_{(Y,0)}}(\cdot, \coker(d))$ to the  exact sequence
$$
 0\longrightarrow
\mathcal O_{(Y,0)}\stackrel{i}{\longrightarrow}
\coker{(d)}       \stackrel{\pi}{\longrightarrow}
\coker{(d)}/\mathcal O_{(Y,0)}\longrightarrow
0
$$
we get the exact sequence
\begin{multline*}
 0\longrightarrow
\Hom_{\mathcal O_{(Y,0)}}(\coker{(d)}/\mathcal O_{(Y,0)}, \coker(d))
  \stackrel{\pi^*}{\longrightarrow}
\Hom_{\mathcal O_{(Y,0)}}(\coker{(d)}, \coker(d))\stackrel{i^*}{\longrightarrow}\\
  \stackrel{i^*}{\longrightarrow}
\Hom_{\mathcal O_{(Y,0)}}(\mathcal O_{(Y,0)}, \coker(d))\cong  \coker(d)
  \stackrel{\phi}{\longrightarrow}
\ext^1_{\mathcal O_{(Y,0)}}(\coker{(d)}/\mathcal O_{(Y,0)},\coker{(d)}).
\end{multline*}
Notice that $\phi=0$  if and only if any $\mathcal O_{(Y,0)}-$morphism
$f:\mathcal O_{(Y,0)}\longrightarrow \coker(d)$ can be lifted to a $\mathcal O_{(Y,0)}-$morphism
$\tilde{f}:\coker(d) \longrightarrow \coker(d)$.
We can define
$\tilde{f}(q)=\alpha^{-1} f(\alpha q)$,
for all $q\in \coker(d)$.
Since $\alpha q\in \mathcal O_{(Y,0)}$
$$
\tilde{f}(q)=\alpha^{-1} f(\alpha q)=\alpha^{-1}\alpha qf(1)= qf(1)\in \coker(d).
$$
Being $f$ a $\mathcal O_{(Y,0)}$-linear map, $\tilde{f}$ is $\mathcal O_{(Y,0)}$-linear
as well.

\medskip
\noindent
Assume now $(ii)$.
We have an exact sequence of $ \mathcal O_{(Y,0)}$-modules:
$$
 0\longrightarrow
e \mathcal O_{(Y,0)}\stackrel{i}{\longrightarrow}
\coker{(d)}       \stackrel{\pi}{\longrightarrow}
\coker{(d)}/\mathcal O_{(Y,0)}\longrightarrow
0,
$$
inducing the exact sequence
\begin{multline*}
 0\longrightarrow
\Hom_{\mathcal O_{(Y,0)}}(\coker{(d)}/e \mathcal O_{(Y,0)}, \coker(d))
  \stackrel{\pi^*}{\longrightarrow}
\Hom_{\mathcal O_{(Y,0)}}(\coker{(d)}, \coker(d))\stackrel{i^*}{\longrightarrow}\\
  \stackrel{i^*}{\longrightarrow}
\Hom_{\mathcal O_{(Y,0)}}(e \mathcal O_{(Y,0)}, \coker(d))\cong \coker(d)
  \stackrel{\phi}{\longrightarrow}
\ext^1_{\mathcal O_{(Y,0)}}(\coker{(d)}/e\mathcal O_{(Y,0)},\coker{(d)}).
\end{multline*}
Since  $\phi=0$,  any $\mathcal O_{(Y,0)}-$morphism
$f:e \mathcal O_{(Y,0)}\longrightarrow \coker(d)$ can be lifted to a $\mathcal O_{(Y,0)}$-morphism
$\tilde{f}:\coker(d) \longrightarrow \coker(d)$.
On the other hand, since $\alpha \coker{(d)}\subset e \mathcal O_{(Y,0)}$
and $\alpha \in \mathcal O_{(Y,0)}\setminus \{0\}$
we get that
$\Hom_{\mathcal O_{(Y,0)}}(\coker{(d)}/e \mathcal O_{(Y,0)}, \coker(d))=0$.
Hence we have the isomorphism
$$
\Hom_{\mathcal O_{(Y,0)}}(\coker{(d)}, \coker(d))\stackrel{i^*}{\cong}
\Hom_{\mathcal O_{(Y,0)}}(e \mathcal O_{(Y,0)}, \coker(d)).
$$

\noindent
\begin{lemma}
\label{lem-est}
 $\coker(d)$ admits a structure of $\mathcal O_{(Y,0)}$-algebra with $e$ as identity element.
\end{lemma}
\begin{proof}
Recall that, since $\coker(d)$ is a faithful $\mathcal O_{(Y,0)}$-module, if  $e\omega =e\omega'$
then $\omega=\omega'$.
For all $q\in \coker{(d)}$ we consider the $\mathcal O_{(Y,0)}$-morphism
$f_q:e \mathcal O_{(Y,0)}\longrightarrow \coker{(d)}$ defined by
$f_q(e w)= w  q$, for all $w\in \mathcal O_{(Y,0)}$.
Hence there exists a $\mathcal O_{(Y,0)}$-morphism
$\tilde{f}_q:\coker(d) \longrightarrow \coker(d)$ extending $f_q$.
The  structure of $\coker(d)$ as $\mathcal O_{(Y,0)}$-algebra is defined by:
for all $q_1,q_2\in \coker(d)$,
$$
q_1 q_2=\tilde{f}_{q_1}(q_2)=\tilde{f}_{q_2}(q_1).
$$
If we write $\alpha q_1=e a$, $a\in \mathcal O_{(Y,0)}$ 
then
$$
\alpha^2 \tilde{f}_{q_2}(q_1)=\alpha \tilde{f}_{q_2}(\alpha q_1)=
\alpha f_{q_2}(ea)=\alpha a q_2.
$$
By symmetry, $\alpha q_2=e b$, $b\in \mathcal O_{(Y,0)}$,
$$
\alpha^2 \tilde{f}_{q_1}(q_2)=\alpha \tilde{f}_{q_1}(\alpha q_2)=
\alpha f_{q_1}(eb)=\alpha b q_1.
$$
Hence
$$
\alpha^2 \tilde{f}_{q_2}(q_1)= e a b =\alpha^2 \tilde{f}_{q_1}(q_2)
$$
Since $\coker(d)$ is a faithful $\mathcal O_{(Y,0)}$-module and $\alpha \neq 0$ we get
$
\tilde{f}_{q_2}(q_1) =\tilde{f}_{q_1}(q_2).
$
We let to the reader the proof that the previous definition makes $\coker(d)$
a $\mathcal O_{(Y,0)}$-algebra with $e$ as identity element.
\end{proof}

\medskip
Notice that from the inclusion $\alpha \coker(d)\subset e \mathcal O_{(Y,0)}$
we deduce that
$$
\coker(d)\subset e \alpha^{-1} \mathcal O_{(Y,0)} \subset e K(\mathcal O_{(Y,0)})
$$
where $K(\mathcal O_{(Y,0)})$ is the ring of fractions of $\mathcal O_{(Y,0)}$.
Then we define
$$
B=e^{-1}\coker(d)\subset K(\mathcal O_{(Y,0)}).
$$
Since $e$ is the unit element of $\coker(d)$, $B$ is an $\mathcal O_{(Y,0)}$-algebra.
On the other hand, since $B$ is a finitely generated $\mathcal O_{(Y,0)}$-module we get that
$B\subset \overline{\mathcal O_{(Y,0)}}=\mathbb C\{t\}$.

Assume now that any of the two above equivalent conditions hold.
We proved that $B$ is a finitely generated sub-$\mathcal O_{(Y,0)}$-algebra of $K(\mathcal O_{(Y,0)})$ so $B$ is a domain and the extension $\mathcal O_{(Y,0)}\subset B$ is finite.
Then $B$ is a Cohen-Macaulay local ring of dimension one.
Hence there exists a curve singularity $(X,0)\subset (\mathbb C^{b+2},0)$ such that
$\mathcal O_{(X,0)} = B$.
\end{proof}

\medskip
\begin{remark}
Assume  $(ii)$ of the last result holds.
Since $\coker(d)\cong B$ is  a one-dimensional local domain  then
$\length_{\mathcal O_{(Y,0)}}(\coker{(d)}/e \mathcal O_{(Y,0)})< +\infty$
for any $e\in \coker(d)\setminus \{0\}$, and then
there exists
$\alpha\in \mathcal O_{(Y,0)}\setminus \{0\}$  such that
$$
\alpha \coker{(d)}\subset e \mathcal O_{(Y,0)}\subset \coker{(d)}.
$$
On the other hand, if $\length_{\mathcal O_{(Y,0)}}(\coker{(d)}/e \mathcal O_{(Y,0)})< +\infty$
for an element $e\in \coker(d)\setminus \{0\}$ then
there exists $\alpha\in \mathcal O_{(Y,0)}\setminus \{0\}$ satisfying the above two inclusions.
The second condition of $(ii)$ on the $\ext^1$ is easy computable.
\end{remark}

\medskip
\begin{remark}
With the notations of \exref{no-alg}.
We know that the $\mathcal O_{(Y,0)}$-module $B=\mathcal O_{(Y,0)} + t \mathcal O_{(Y,0)}\subset \mathbb C\{t\}$ is not a ring.
We can check this fact by using the last result.
The second part of $(ii)$ of the last theorem is equivalent to: any morphism
$f:\mathcal O_{(Y,0)}\longrightarrow B$ can we lifted to a morphism
$\tilde{f}:B\longrightarrow B$.
If $f$ is the natural inclusion then there is not a such $\tilde{f}$.
In fact, we have $t x -y=0$ in $B$, so
$0=\tilde{f}(tx-y)=\tilde{f}(t) x-f(y)=\tilde{f}(t)t^4-t^3$.
Hence $\tilde{f}(t)t-1=0$, this is not possible in $\mathbb C\{t\}$.
\end{remark}

\medskip
\begin{remark}
\label{alg-est}
Assume that we have the following $\mathcal O_{(Y,0)}$-module extensions, where $(Y,0)$ is a plane curve singularity,
$$
\mathcal O_{(Y,0)}\stackrel{i}{\hookrightarrow} B \hookrightarrow \mathbb C\{t\}=\overline{\mathcal O_{(Y,0)}}.
$$
Assume that $B$ is an $\mathcal O_{(Y,0)}$-algebra with $i$ its the structural morphism.
This structure is unique because  is the induced by
$B\subset B[c^{-1}]=\mathcal O_{(Y,0)}[c^{-1}]$ where $c$ is a generator
of the conductor of the ring extension
$\mathcal O_{(Y,0)}\subset \mathbb C\{t\}$.
Notice that this structure can be recovered as follows, see \lemref{lem-est}.
Given $q\in B$ we consider the $\mathcal O_{(Y,0)}$-linear map
$f_q:\mathcal O_{(Y,0)}\longrightarrow B$ defined by
$f_q(b)=b q$, for all $b\in \mathcal O_{(Y,0)}$.
This map has a trivial  lifting $\tilde{f_q}:B\longrightarrow B$ defined by
$\tilde{f_q}(b)=bq$, for all $b\in B$.
Then the product in $B$ is
$$
q_1q_2=\tilde{f_{q_1}}(q_2)=\tilde{f_{q_2}}(q_1).
$$
Notice that the lifting $\tilde{f_q}$ is unique since the extension $\mathcal O_{(Y,0)}\subset \mathbb C\{t\}$ has a non-trivial conductor.
\end{remark}

\medskip
In the following result we refine \thmref{char-alg} characterizing the matrix factorization defined by generic plane projections.

\medskip
\begin{proposition}
\label{MatRep-proj}
Let $(Y,0)\subset (\mathbb C^2,0)$ be an irreducible plane curve singularity defined by an equation $F\in\mathbb C\{x,y\}$.
Let $(d,h)$ be a matrix factorization of $F$ such that $\coker{(d)}$ is a $\mathcal O_{(Y,0)}$-algebra  satisfying the equivalent conditions of \thmref{char-alg}.
Let $(X,0)\subset (\mathbb C^{2+b},0)$ be a curve singularity such that $\mathcal O_{(X,0)}\cong \coker(d)$ as $\mathbb C$-algebras with $b=\beta_{\mathcal O_{(Y,0)}}(\coker(d))$, \thmref{char-alg}.
The following conditions are equivalent:
\begin{enumerate}
\item[(i)] $(Y,0)$ is the plane generic projection of $(X,0)$,
\item[(ii)]
the cosets
$\overline{x},\overline{y}\in \mathcal O_{(X,0)}$ are $\mathbb C$-linear independent
modulo $\max_{(X,0)}^2$ and $\mu(Y)=\overline{\mu}(X)$.
\end{enumerate}
\end{proposition}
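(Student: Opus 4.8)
The plan is to prove the two implications separately, using the characterization of the generic plane projection via Whitney's $C_5$ cone (\propref{cone5}, \propref{generic-proj}) and the numerical rigidity encoded in $\overline{\mu}$. Throughout, identify $\mathcal O_{(X,0)}$ with $\coker(d) = e^{-1}\coker(d) = B \subset \mathbb C\{t\}$ via \thmref{char-alg}, so that $\mathcal O_{(Y,0)} = \mathbb C\{x,y\}/(F) \hookrightarrow B$ is the structural inclusion and $\overline{\mathcal O_{(Y,0)}} = \overline{B} = \mathbb C\{t\}$; write $x = x(t), y = y(t)$ for the images of the coordinates, which generate $\max_{(Y,0)}$.

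\medskip
\textbf{(i) $\Rightarrow$ (ii).} If $(Y,0)$ is the generic plane projection of $(X,0)$, then $\mu(Y,0) = \overline\mu(X,0)$ holds by the very definition \defref{gen-proj} of $\overline\mu$. For the linear independence: by \propref{generic-proj} the projection is $\pi_H$ for some $H \in \mathcal W_{(X,0)}$, so $H \cap C_5(X,0) = \{0\}$; in particular $H$ does not contain the tangent line of $(X,0)$, and after putting $(X,0)$ in standard form with $x_1 = t^{e}$, $e = e_0(X,0)$, one of the two linear forms cutting out $H$ has a nonzero $x_1$-coefficient. As in the proof of \lemref{bound-b}, this means $v_t(x) = e$ or $v_t(y) = e$, and moreover the \emph{other} coordinate has $t$-order strictly between $e$ and $2e$ (again because $H$ avoids the tangent line, so the second form is not supported only on the $x_1$-direction). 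Hence $x(t), y(t)$ have distinct valuations, both $\le 2e_0(X,0) - 1 < v_t(\max_{(X,0)}^2)$-worth of leading terms that cannot cancel; concretely, a $\mathbb C$-linear relation $\lambda \overline x + \mu\overline y \in \max_{(X,0)}^2$ forces $\lambda x(t) + \mu y(t) \in \max_{(X,0)}^2 \cdot \mathbb C\{t\}$, whose elements have $t$-order $\ge 2e$, so $\lambda = \mu = 0$.

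\medskip
\textbf{(ii) $\Rightarrow$ (i).} Assume $\overline x, \overline y$ are $\mathbb C$-linearly independent mod $\max_{(X,0)}^2$ and $\mu(Y,0) = \overline\mu(X,0)$. The inclusion $\mathcal O_{(Y,0)} \hookrightarrow \mathcal O_{(X,0)}$ is realized by the linear projection $\pi_L : \ori{2+b} \supset (X,0) \to (Y,0)$, $\pi_L = (x,y)$, parallel to the linear space $L = \{x = y = 0\}$ of dimension $b = \beta_{\mathcal O_{(Y,0)}}(\mathcal O_{(X,0)})$. The linear independence of $\overline x, \overline y$ mod $\max_{(X,0)}^2$ says exactly that $L$ is transversal to the tangent cone of $(X,0)$ at the relevant linear level — in particular $L$ does not contain the tangent line — so we may take $L$ to be a \emph{coordinate} plane in the standard parametrization. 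What remains is to upgrade ``$L$ avoids the tangent line'' to ``$L \in \mathcal W_{(X,0)}$,'' i.e. $L \cap C_5(X,0) = \{0\}$. Here is where the Milnor number enters: for \emph{any} $L$ avoiding the tangent line the projection $\pi_L(X,0)$ is a reduced plane curve singularity, and by semicontinuity of $\mu$ in the flat family of projections (proof of \propref{generic-proj}, Teissier) together with the inequality $\mu(\pi_L(X,0),0) \ge \overline\mu(X,0)$ (generic is minimal, and the family has constant $\mu$ precisely over $\mathcal W_{(X,0)}$), equality $\mu(\pi_L(X,0),0) = \mu(Y,0) = \overline\mu(X,0)$ forces $L$ to lie in the locus where the resolution-in-family persists, which is exactly $\mathcal W_{(X,0)}$. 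Therefore $(Y,0) = \pi_L(X,0)$ is a generic plane projection of $(X,0)$.

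\medskip
\textbf{The main obstacle} is the final step of (ii) $\Rightarrow$ (i): showing that the equality of Milnor numbers genuinely pins $L$ down to $\mathcal W_{(X,0)}$ rather than merely to the open set of $L$ avoiding the tangent line. One must argue that if $L \notin \mathcal W_{(X,0)}$, i.e. $L$ meets some non-tangent-line secant direction in $C_5(X,0)$, then $\pi_L$ identifies two distinct branches-worth of arcs (or collapses a secant), strictly dropping $\delta$ — equivalently increasing $\mu$ via \propref{basic}(i) and the $\delta$-inequality of the Enriques–Chisini remark $\delta(X,0) \le \delta(\pi_L(X,0),0) \le (e_0(X,0)-1)\delta(X,0) - \binom{e_0(X,0)-1}{2}$, with the lower bound attained only for $L \in \mathcal W_{(X,0)}$. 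Making this comparison precise — that $\mu(\pi_L(X,0),0)$ is strictly minimized exactly on $\mathcal W_{(X,0)}$, using \propref{cone5} to enumerate the bad directions — is the technical heart, and I would lean on the resolution-in-family construction recalled after \propref{generic-proj} to control $\delta$ via $\sum_p e_1(X,p)$ (\propref{basic2}).
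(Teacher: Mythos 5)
Your overall strategy matches the paper's (both directions reduce to the same two facts), but there are two problems. The minor one is in (i)~$\Rightarrow$~(ii): your claim that the ``other'' projection coordinate has $t$-order strictly between $e$ and $2e$ is false in general --- already for the monomial curve $M(4,9,10)$ a generic projection has $y$ of order $9>2e=8$ --- so your valuation argument does not close the case $\lambda=0$, $\mu\neq 0$ of a relation $\lambda\overline{x}+\mu\overline{y}\in\max_{(X,0)}^2$. The paper avoids valuations entirely: the projection is given by two linear forms cutting out an $(n-2)$-dimensional linear variety, hence linearly independent, and since $I_{(X,0)}\subset(x_1,\dots,x_n)^2$ in the minimal embedding their cosets stay independent modulo $\max_{(X,0)}^2$; this is a one-line argument you could substitute.

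The substantial gap is exactly the step you yourself flag as the ``main obstacle'' in (ii)~$\Rightarrow$~(i): you never prove that a finite linear projection with $\mu(\pi_L(X,0),0)=\overline{\mu}(X,0)$ must have center $L$ in $\mathcal W_{(X,0)}$. Semicontinuity in the flat family recalled after \propref{generic-proj} cannot deliver this, because that family (and its resolution and parametrization in family) is only constructed over the open set $W_{(X,0)}$ in the first place; what is actually needed is that $\mu$ (equivalently $\delta$, by \propref{basic}(i)) strictly increases for every finite linear projection whose center meets $C_5(X,0)$ outside the origin. That statement is precisely the external result the paper invokes to finish, \cite[Proposition IV.2 b]{BG80} (in the framework of \cite{BGG80}), and your sketch via \propref{basic2} and the Enriques--Chisini bounds is a plan for reproving it rather than a proof. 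So the proposal identifies the correct reduction --- linear independence modulo $\max_{(X,0)}^2$ puts $(Y,0)$ among the linear projections, and minimality of the Milnor number should single out the generic ones --- but leaves its key ingredient unproved and uncited.
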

\begin{proof}
We write $n=2+b$.
Assume that $(Y,0)$ is the plane generic projection of $(X,0)$.
From \thmref{generic-proj} we get $\mu(Y)=\overline{\mu}(X)$.
We know that the morphism
$$\pi^*: \mathcal O_{(Y,0)}\longrightarrow \mathcal O_{(X,0)}=\mathbb C\{x_1,\dots,x_n\}/I_X,$$ with $I_X\subset (x_1,\dots,x_n)^2$,
satisfies
$\pi^*(\overline{x})=z_1\overline{x_1}+\dots+z_n \overline{x_n}$ and   $\pi^*(\overline{y})=z_{n+1}\overline{x_1}+\dots+z_{2 n} \overline{x_n}$, for some
$z_1,\dots,z_{2n}\in\mathbb C$, and the linear variety
$z_1x_1+\dots+z_n x_n=z_{n+1}x_1+\dots+z_{2 n} x_n=0$ is of dimension $n-2$.
Hence the cosets $\overline{x},\overline{y}\in \mathcal O_{(X,0)}$ are $\mathbb C$-linear independent modulo $\max_{(X,0)}^2$ and we get $(ii)$.

\noindent
Assume now $(ii)$.
Notice that $\overline{x},\overline{y}\in \mathcal O_{(X,0)}$ are $\mathbb C$-linear independent modulo $\max_{(X,0)}^2$ and $\mu(Y)=\overline{\mu}(X)$.
Then, after a change of variables, we may assume that $\max_{(X,0)}/\max_{(X,0)}^2$ is generated by the cosets of $x,y,x_3,\dots,x_n$, i.e.
$\mathcal O_{(X,0)}\cong \mathbb C\{x,y,x_3,\dots,x_n\}/I_X$ with $I_X\subset (x,y,x_3,\dots,x_n)^2 \mathbb C\{x,y,x_3,\dots,x_n\}$.
Hence $(Y,0)$ is a linear projection of $(X,0)$ and since $Y$ is $\mu(Y)=\overline{\mu}(X)$
we get that $(Y,0)$ is the plane generic projection of $(X,0)$, \cite[Proposition IV.2 b]{BG80}.
\end{proof}

\medskip
Next we compare two curve singularities sharing the same generic plane projection.

\medskip
\begin{proposition}
\label{isos}
Let $(Y,0)$ be a plane curve singularity.
Let $B_1$, $B_2$ be two $\mathcal O_{(Y,0)}$-algebras
$$
\mathcal O_{(Y,0)}\stackrel{i}{\hookrightarrow} B_i \hookrightarrow \mathbb C\{t\}=\overline{\mathcal O_{(Y,0)}}
$$
$i=1,2$.
A $\mathcal O_{(Y,0)}$-linear map $\phi:B_1\longrightarrow B_2$ is an isomorphism as $\mathcal O_{(Y,0)}$-modules if and only if $\phi$ is an isomorphism of  $\mathcal O_{(Y,0)}$-algebras.
\end{proposition}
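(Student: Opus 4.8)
The reverse implication is immediate, since an $\mathcal{O}_{(Y,0)}$-algebra isomorphism is in particular an $\mathcal{O}_{(Y,0)}$-module isomorphism; so the whole content is the forward one. The plan is to exploit that $B_1$ and $B_2$ both lie between $\mathcal{O}_{(Y,0)}$ and its normalization $\mathbb{C}\{t\}$ in order to show that any $\mathcal{O}_{(Y,0)}$-linear bijection $\phi\colon B_1\to B_2$ is rigid: it must be multiplication by a unit, whence $B_1=B_2$ inside $\mathbb{C}\{t\}$ and $\phi$ respects the ring structure, which by \remref{alg-est} is unique.

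First I would localise at the conductor. We may assume $(Y,0)$ is singular, the smooth case being trivial ($B_1=B_2=\mathbb{C}\{t\}$). Let $c\in\mathcal{O}_{(Y,0)}\setminus\{0\}$ be a non-unit generating the conductor $\mathcal{C}(Y,0)$ of $\mathcal{O}_{(Y,0)}\subset\mathbb{C}\{t\}$; thus $c\,\mathbb{C}\{t\}\subseteq\mathcal{O}_{(Y,0)}\subseteq B_i\subseteq\mathbb{C}\{t\}$. Inverting $c$ therefore collapses each of $\mathcal{O}_{(Y,0)}$, $B_1$, $B_2$, $\mathbb{C}\{t\}$ to the common fraction field $K=\mathrm{tot}(\mathcal{O}_{(Y,0)})$. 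Since $\phi$ is $\mathcal{O}_{(Y,0)}$-linear, its localisation is a $K$-linear endomorphism of the one-dimensional $K$-vector space $K$, hence multiplication by $\lambda:=\phi(1_{B_1})\in K$; and as $B_1,B_2$ are domains they embed into $K$, so already $\phi(b)=\lambda b$ (product taken in $K$) for every $b\in B_1$, and $\lambda B_1=B_2$.

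Next I would check that $\lambda$ is a unit of $B_1$. From $\lambda=\phi(1_{B_1})\in B_2\subseteq\mathbb{C}\{t\}$ we get $v_t(\lambda)\ge 0$, and from $1_{B_2}\in B_2=\lambda B_1$ we get $\lambda^{-1}\in B_1\subseteq\mathbb{C}\{t\}$, hence $v_t(\lambda)\le 0$; so $v_t(\lambda)=0$. As $\mathbb{C}\{t\}$ is module-finite over $B_1$, the ring $B_1$ is local with maximal ideal $\{b\in B_1 : v_t(b)>0\}$, so $\lambda^{-1}$ — lying in $B_1$ with $v_t=0$ — is a unit of $B_1$, and therefore so is $\lambda$. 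Consequently $B_2=\lambda B_1=B_1$ as subrings of $\mathbb{C}\{t\}$, so the two algebra structures coincide (both are the restriction of the multiplication of $K$, \remref{alg-est}), and $\phi$ is multiplication by the unit $\lambda$ of $B_1=B_2$; after the harmless normalisation $\phi(1_{B_1})=1$ (replacing $\phi$ by $\lambda^{-1}\phi$, or simply recording $B_1=B_2$) one obtains an $\mathcal{O}_{(Y,0)}$-algebra isomorphism.

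I expect the main obstacle to be the rigidity step, i.e. pinning $\phi$ down to multiplication by a single scalar: a priori $\phi$ acts independently on a minimal system of $\mathcal{O}_{(Y,0)}$-module generators $1,b_2,\dots,b_b$ of $B_1$, and this freedom disappears only after passing to the total ring of fractions — a move legitimate precisely because the conductor is a nonzero ideal. The one further point to handle carefully is that an $\mathcal{O}_{(Y,0)}$-module isomorphism need not a priori send $1$ to $1$; this evaporates once $\lambda\in B_1^{\times}$ is established, and is in any case cosmetic for the intended application, namely the identification of isomorphism classes used in \thmref{bijection}.
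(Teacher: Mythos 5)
Your argument is correct, and it takes a genuinely different route from the paper's. The paper proves multiplicativity homologically: using the uniqueness of liftings of $\mathcal O_{(Y,0)}$-linear maps $\mathcal O_{(Y,0)}\to B$ to endomorphisms of $B$ (\remref{alg-est}, itself a consequence of the conductor being non-trivial), it identifies $\phi\circ\widetilde{f_{q_1}}\circ\phi^{-1}$ with the lifting $\widetilde{f_{\phi(q_1)}}$ and concludes $\phi(q_1q_2)=\phi(q_1)\phi(q_2)$ directly, without ever determining what $\phi$ looks like. You instead invert a conductor element to land in the one-dimensional $K$-vector space $K=\mathrm{tot}(\mathcal O_{(Y,0)})$, which forces $\phi$ to be multiplication by $\lambda=\phi(1_{B_1})$, and then the valuation argument in $\mathbb C\{t\}$ (using that $B_1$ is local with maximal ideal the elements of positive order) shows $\lambda\in B_1^{\times}$, hence $B_2=\lambda B_1=B_1$ inside $\mathbb C\{t\}$. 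Both proofs hinge on the same fact --- the conductor of $\mathcal O_{(Y,0)}\subset\mathbb C\{t\}$ is nonzero --- but yours extracts strictly more: the two subalgebras actually coincide and every $\mathcal O_{(Y,0)}$-module isomorphism between them is multiplication by a unit, whereas the paper's lifting argument establishes multiplicativity more abstractly. One point worth flagging: a module isomorphism with $\phi(1_{B_1})\neq 1_{B_2}$ is not literally a ring homomorphism, so the statement needs the normalization you perform (replacing $\phi$ by $\lambda^{-1}\phi$, or just recording $B_1=B_2$); the paper's own proof silently assumes $\phi\mid_{\mathcal O_{(Y,0)}}=\mathrm{Id}$ in its last line, so your explicit treatment of this is, if anything, more careful than the source, and it fully suffices for the application in \thmref{bijection}.
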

\begin{proof}
If $\phi$ is an isomorphism $\mathcal O_{(Y,0)}$-algebras then it is an isomorphism of $\mathcal O_{(Y,0)}$-modules.

Assume now that $\phi$ is an isomorphism of $\mathcal O_{(Y,0)}$-modules.
I will use the notations of \remref{alg-est}.
First we prove that for all $q_1,q_2\in B_1$ it holds
$\phi(q_1q_2)=\phi(q_1)\phi(q_2)$.
Since
$$
\phi(q_1q_2)=\phi( \widetilde{f_{q_1}}(q_2)) \text{\quad    and  \quad }
\phi(q_1)\phi(q_2)=\widetilde{f_{\phi(q_1)}}(\phi(q_2))
$$
so we have to prove that
$$
\phi( \widetilde{f_{q_1}}(q_2))=\widetilde{f_{\phi(q_1)}}(\phi(q_2)).
$$
We can take as lifting of $f_{\phi(q_1)}$
$$
\widetilde{f_{\phi(q_1)}}=\phi \circ \widetilde{f_{q_1}}  \circ \phi^{-1}.
$$
Then
$$
\widetilde{f_{\phi(q_1)}}(\phi(q_2))=
\phi \circ \widetilde{f_{q_1}}  \circ \phi^{-1}\phi(q_2) =
\phi \circ \widetilde{f_{q_1}}  (q_2) =
\phi( \widetilde{f_{q_1}}(q_2)).
$$
Notice that $\phi\mid_{\mathcal O_{(Y,0)}}=Id_{\mathcal O_{(Y,0)}}$.
\end{proof}

\medskip
\begin{definition}
\label{MatRep}
Given an irreducible series $F\in \mathbb C\{x,y\}$ and an  integer $b\ge 2$ we denote by
$\mr_{F,b}$ the set of pairs of dimension $b$ square matrices $(d,h)$ with entries in $(x,y)\mathbb C\{x,y\}$ such that
$$
h d = d h= F \; \mathrm{Id}_{\mathbb C\{x,y\}^b},
$$
and $\coker(d)$ satisfies the equivalent properties of  \thmref{char-alg} and \propref{MatRep-proj}, i.e.
$\coker{(d)}$ is isomorphic to $\mathcal O_{(X,0)}$ as $\mathcal O_{(Y,0)}$-modules,where $(X,0)$ is a curve singularity with $F$ as the equation defining a generic plane projection $(Y,0)$ of $(X,0)$.
\end{definition}

\medskip
We denote by   $Gl_b(\mathbb C\{x,y\})$ the group of invertible $b \times b$ matrices with entries in $\mathbb C\{x,y\}$;
we consider the following action in $\mr_{F,b}$
$$
\begin{array}{ccc}
  \mr_{F,b} \times  Gl_b(\mathbb C\{x,y\})^2& \stackrel{\circ}{\longrightarrow} &   \mr_{F,b}\\ \\
((d,h),(\phi,\psi)) & \mapsto & (\phi,\psi)\circ (d,h)=(\phi d\psi^{-1},\psi h \phi^{-1})
\end{array}
$$

\medskip
\begin{definition}
Given a plane branch singularity $(Y,0)\subset \ori{2}$ we denote by
$\mathcal G_{(Y,0),b}^n$, $n\ge3$, the set of curve singularities $(X,0)\subset \ori{n}$
such that $(Y,0)$ is a generic plane projection of $(X,0)$ and
$b=\beta_{\mathcal O_{(Y,0)}}(\mathcal O_{(X,0)})$.
For each such a curve singularity $(X,0)$ we choose a projection $\pi_L:(X,0)\longrightarrow (Y,0)$,
$L\in W_{(X,0)}$, equivalently an immersion
$\pi_L^*:  \mathcal O_{(Y,0)}\longrightarrow \mathcal O_{(X,0)}$.
\end{definition}

\medskip
We consider in $\mathcal G_{(Y,0),b}^n$, $n\ge3$ the following binary equivalence relation: given $(X_i,0)\in \mathcal G_{(Y,0),b}^n$, $i=1,2$,
we write $(X_1,0)  \thicksim (X_2,0)$ if and only if
$\mathcal O_{(X_1,0)}\cong \mathcal O_{(X_2,0)}$ as $\mathcal O_{(Y,0)}$-algebras, see \propref{isos}.

\medskip
\begin{remark}
Recall that two curve singularities $(X_i,0)\subset (\mathbb C^n,0)$, $i=1,2$, share
the same generic plane projection if and only if $(X_1,0)$ and $(X_2,0)$ have isomorphic saturations, \cite[Theorem VI.0.2]{BGG80}.
\end{remark}

\medskip
\begin{theorem}
\label{bijection}
There is a bijective  map
$$
\begin{array}{ccc}
 \mathcal G_{(Y,0),b}^n / \thicksim & \stackrel{\epsilon}{\longrightarrow} &   \mr_{F,b}/Gl_b(\mathbb C\{x,y\})^2\\ \\
(X,0) & \mapsto & \overline{(d,h)}
\end{array}
$$
where $(d,h)$ is a matrix factorization of $F$, where $F$ is an equation defining $(Y,0)$.
\end{theorem}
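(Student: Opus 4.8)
The plan is to construct $\epsilon$ by sending a curve singularity $(X,0)$, together with its chosen projection $\pi_L^*:\mathcal{O}_{(Y,0)}\hookrightarrow\mathcal{O}_{(X,0)}$, to the $Gl_b(\mathbb{C}\{x,y\})^2$-orbit of a reduced matrix factorization of $F$ whose cokernel is $\mathcal{O}_{(X,0)}$ viewed as an $\mathcal{O}_{(Y,0)}$-module, and then to verify that this assignment is well defined, injective and surjective. As a first step I would recall, exactly as in the proof of \thmref{matfact}, that $\mathcal{O}_{(X,0)}$ is a maximal Cohen--Macaulay $\mathcal{O}_{(Y,0)}$-module with $b=\beta_{\mathcal{O}_{(Y,0)}}(\mathcal{O}_{(X,0)})$ minimal generators and no free direct summand; hence by \cite[Theorem 6.1]{Eis80} (see also \cite[Theorem 2.1.1]{EP16}) there is a reduced matrix factorization $(d,h)$ of $F$ with $d,h\in\mathrm{Mat}_{b\times b}((x,y)\mathbb{C}\{x,y\})$ and an exact sequence $0\to\mathbb{C}\{x,y\}^b\stackrel{d}{\longrightarrow}\mathbb{C}\{x,y\}^b\to\mathcal{O}_{(X,0)}\to 0$. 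Since $\mathcal{O}_{(Y,0)}\subset\mathcal{O}_{(X,0)}\subset\overline{\mathcal{O}_{(Y,0)}}=\mathbb{C}\{t\}$ and $(Y,0)$ is the generic plane projection of $(X,0)$, the pair $(d,h)$ lies in $\mr_{F,b}$ by \thmref{char-alg} and \propref{MatRep-proj}, so we may set $\epsilon(X,0)=\overline{(d,h)}$.

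The heart of the argument is the \emph{dictionary lemma}: two reduced matrix factorizations $(d,h)$ and $(d',h')$ of $F$ satisfy $\coker(d)\cong\coker(d')$ as $\mathcal{O}_{(Y,0)}$-modules if and only if they lie in a single $Gl_b(\mathbb{C}\{x,y\})^2$-orbit. For the ``if'' direction, $d'=\phi d\psi^{-1}$ forces $\phi(\im d)=\im d'$, so $\phi$ descends to an isomorphism of cokernels. For ``only if'', an isomorphism of cokernels lifts to a morphism between the two minimal free presentations $0\to\mathbb{C}\{x,y\}^b\stackrel{d}{\longrightarrow}\mathbb{C}\{x,y\}^b\to\coker(d)\to0$; because the entries of $d,d'$ lie in the maximal ideal these presentations are minimal, so by uniqueness of minimal resolutions the lift consists of invertible matrices $\psi,\phi$ with $\psi d=d'\phi$, and then $\phi h=h'\psi$ follows since $h$ is the unique matrix with $hd=F\,\mathrm{Id}$ over the total ring of fractions. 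Granting this, $\epsilon$ is independent of the chosen matrix factorization realizing $\mathcal{O}_{(X,0)}$; and if $(X_1,0)\thicksim(X_2,0)$, then by \propref{isos} the rings are already isomorphic as $\mathcal{O}_{(Y,0)}$-modules, hence so are the cokernels, giving $\epsilon(X_1,0)=\epsilon(X_2,0)$.

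For injectivity I run the same chain backwards: if $\epsilon(X_1,0)=\epsilon(X_2,0)$, pick representatives with $(d_2,h_2)=(\phi,\psi)\circ(d_1,h_1)$; then $\phi$ induces $\coker(d_1)\cong\coker(d_2)$, i.e. $\mathcal{O}_{(X_1,0)}\cong\mathcal{O}_{(X_2,0)}$ as $\mathcal{O}_{(Y,0)}$-modules, and \propref{isos} upgrades this to an $\mathcal{O}_{(Y,0)}$-algebra isomorphism, so $(X_1,0)\thicksim(X_2,0)$. For surjectivity, start from $(d,h)\in\mr_{F,b}$. Since $dh=F\,\mathrm{Id}$, $F$ annihilates $\coker(d)$, so $\coker(d)$ is an $\mathcal{O}_{(Y,0)}$-module, and minimality of the presentation $d$ yields $\beta_{\mathcal{O}_{(Y,0)}}(\coker(d))=b$. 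By \thmref{char-alg}, $B=e^{-1}\coker(d)$ is a one-dimensional local domain finite over $\mathcal{O}_{(Y,0)}$ with $\mathcal{O}_{(Y,0)}\subset B\subset\mathbb{C}\{t\}$, hence $B=\mathcal{O}_{(X,0)}$ for a curve singularity $(X,0)$ having, by \propref{MatRep-proj}, $(Y,0)$ as a generic plane projection; choosing for $(X,0)$ the projection whose pullback is the inclusion $\mathcal{O}_{(Y,0)}\hookrightarrow B$ produces a class in $\mathcal{G}_{(Y,0),b}^n/\thicksim$ that $\epsilon$ sends to $\overline{(d,h)}$ by the dictionary lemma.

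The step I expect to be the main obstacle is keeping the \emph{choice of projection} coherent: an element of $\mathcal{G}_{(Y,0),b}^n/\thicksim$ implicitly carries a fixed immersion $\pi_L^*:\mathcal{O}_{(Y,0)}\hookrightarrow\mathcal{O}_{(X,0)}$, the relation $\thicksim$ is $\mathcal{O}_{(Y,0)}$-algebra isomorphism fixing $\mathcal{O}_{(Y,0)}$, and one must check that this matches the $Gl_b(\mathbb{C}\{x,y\})^2$-action exactly, including that in the surjectivity step the curve produced from $(d,h)$ can indeed be equipped with the required projection. The other delicate point is making the dictionary lemma precise over the analytic local ring $\mathcal{O}_{(Y,0)}$ with the reducedness normalization built in, in the exact form ``$\coker$ induces a bijection between isomorphism classes of finitely generated maximal Cohen--Macaulay $\mathcal{O}_{(Y,0)}$-modules without free summand and $Gl_b$-orbits of reduced matrix factorizations of $F$'', with $b$ simultaneously the matrix size and the minimal number of $\mathcal{O}_{(Y,0)}$-generators of the cokernel; this is essentially \cite[Chapter~1]{EP16}, but it has to be invoked in this precise normalized shape.
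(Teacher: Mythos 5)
Your proposal is correct and follows essentially the same route as the paper: existence of the matrix factorization via \thmref{matfact}, well-definedness and injectivity via uniqueness of minimal free resolutions over $\mathbb C\{x,y\}$ together with \propref{isos}, and surjectivity directly from the definition of $\mr_{F,b}$ through \thmref{char-alg} and \propref{MatRep-proj}. Your explicit ``dictionary lemma'' and the check that $\thicksim$-equivalent curves land in the same orbit merely spell out steps the paper leaves implicit.
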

\begin{proof}
First we prove that the map $\epsilon$ is defined.
We already proved in \thmref{matfact} that for all $(X,0)\in \mathcal G_{(Y,0),b}^n$ there exists a matrix factorization $(d,h)$ of $F$ inducing a minimal free resolution
$$
0\longrightarrow
\mathbb C\{x,y\}^b\stackrel{d}{\longrightarrow}
\mathbb C\{x,y\}^b\longrightarrow
\mathcal O_{(X,0)}
\longrightarrow 0
$$
Given a second matrix factorization $(d',h')$ of $F$ we have a second minimal resolution
$$
0\longrightarrow
\mathbb C\{x,y\}^b\stackrel{d'}{\longrightarrow}
\mathbb C\{x,y\}^b\longrightarrow
\mathcal O_{(X,0)}
\longrightarrow 0
$$
Both minimal resolution are isomorphic so there are $\phi, \psi\in Gl_b(\mathbb C\{x,y\})^2$
such that $\phi d= d' \psi$.
Then $\phi d \psi^{-1}= d'$ and
$\psi h \phi^{-1}=h'$, so  $\overline{(d,h)}=\overline{(d',h')}$.

The map $\epsilon$ is exhaustive.
Let $\overline{(d,h)}$ be an element of $\mr_{F,b}/Gl_b(\mathbb C\{x,y\})^2$.
That means that $\coker{(d)}\cong \mathcal O_{(X,0)}$ as $\mathcal O_{(Y,0)}$-modules, where
$(X,0)$ is a curve singularity with $(Y,0)$ as a generic plane projection.
Hence $\epsilon(X,0)= \overline{(d,h)}$.

The map $\epsilon$ is injective.
Let $(X_1,0)$, $(X_2,0)$ be curve singularities of $ \mathcal G_{(Y,0),b}^n$  such that $\epsilon(X_1,0)=\epsilon(X_2,0)$.
This implies that
$\mathcal O_{(X_1,0)} \cong \mathcal O_{(X_2,0)}$ as $\mathcal O_{(Y,0)}$-modules.
From \propref{isos} that isomorphism is of $\mathcal O_{(Y,0)}$-algebras, so $\epsilon$ is injective.
\end{proof}



\bibliographystyle{amsplain}

\medskip
\noindent
Joan Elias\\
Departament de Matemàtiques i Informàtica\\
Universitat de Barcelona\\
Gran Via 585, 08007 Barcelona, Spain\\
e-mail: {\tt elias@ub.edu}
\end{document}